\documentclass{mcom-l}

      \usepackage{amssymb}
      \usepackage{amsmath}
      \usepackage{graphicx}
      \usepackage{multirow}
      \usepackage{adjustbox}
      \usepackage[boxed,noend]{algorithm2e}
      \usepackage{hyperref}
      \usepackage{xcolor}
      \usepackage{caption}
      \usepackage{placeins}
      \usepackage{relsize}

      \theoremstyle{plain}
      \newtheorem{theorem}{Theorem}[section]
      \newtheorem{lemma}[theorem]{Lemma}
      
      \newtheorem{proposition}[theorem]{Proposition}

      \theoremstyle{definition}

      \theoremstyle{remark}

      \newcommand{\smalltimes}{\hspace{0.5pt}\raisebox{0.25ex}{$\mathsmaller{\times}$}}

      \makeatletter
      \@namedef{subjclassname@2020}{\textup{2020} Mathematics Subject Classification}
      \makeatother

      \copyrightinfo{}{}
      \makeatletter
      \let\article@logo\@empty
      \def\@logofont{\fontsize{6}{7pt}\selectfont}
      \let\@serieslogo\@empty
      \makeatother

\begin{document}

   \author{Greg Hurst}
   \address{Bedford, New Hampshire}
   \email{ghurst588@gmail.com}

   \title[Computing the Mertens Function]{Practical Computations of the Mertens Function: $M(10^{24})$ and $M(10^{25})$}

   \subjclass[2020]{Primary 11Y70; Secondary 11A25, 11N37}
   \keywords{The Mertens function, the M\"obius function}
   \date{}

   \begin{abstract}
     The Mertens function is defined as $M(x)=\sum_{n\leq x}\mu(n)$, where $\mu(n)$ is the M\"obius function.
     This paper describes a practical implementation of the classical $O(x^{2/3+\varepsilon})$ algorithm for computing $M(x)$ at isolated values, together with the segmented M\"obius and Mertens sieve on which it relies.
     The implementation was used to compute
     $$ M(10^{24}) = 7\,189\,337\,839 \quad\text{and}\quad M(10^{25}) = -258\,560\,632\,948, $$
     taking $7.0$ days and $34.6$ days, respectively.
     These computations extend the previous record of $M(10^{23})$ by two orders of magnitude.
     Run standalone, the segmented sieve computed all Mertens values through $10^{16}$ in approximately $7.4$ days, compared with the $7.5$-month runtime of the author's 2018 computation.
     In pursuit of the most practical isolated-value method, an optimized implementation of the asymptotically faster Helfgott-Thompson algorithm is also presented, running roughly four times faster than the original implementation.
     With both implementations optimized, the choice between methods depends on input size and hardware.
     The contribution is a reproducible computation and implementation study, rather than a new asymptotic algorithm.
   \end{abstract}

   \maketitle

   \section{Introduction}
   \label{intro}

   The M\"obius function is defined by
   $$ \mu(n) = \begin{cases}(-1)^{\omega(n)} & \mbox{ if } n \mbox{ is square-free},\\ 0&\mbox{ otherwise},\end{cases} $$
   where $\omega(n)$ is the number of distinct prime factors of $n$.
   Its summatory function
   $$ M(x)=\sum_{n\leq x}\mu(n) $$
   is known as the Mertens function.
   The size and oscillation of $M(x)$ are closely connected with the distribution of prime numbers.
   For example, for $\operatorname{Re}(s)>1$,
   $$ \frac{1}{\zeta(s)}=s\int_1^\infty M(x)x^{-s-1}\,dx. $$
   Thus the growth of $M(x)$ is tied to the distribution of the zeros of $\zeta(s)$.
   In particular, $M(x)=O(x^{1/2+\varepsilon})$ is equivalent to the Riemann hypothesis.

   Two computational problems associated with $M(x)$ are relevant here.
   The first is to compute every value $M(n)$ up to a given endpoint.
   This gives detailed information about the behavior of the function over an interval, but requires at least linear work in the endpoint.
   The second is to compute $M(x)$ at one isolated value.
   The two problems are related, as the isolated-value algorithms require a sieve over a substantially shorter initial interval.

   This paper reports the isolated values
   $$ M(10^{24}) = 7\,189\,337\,839 \quad\text{and}\quad M(10^{25}) = -258\,560\,632\,948. $$
   The largest previously reported isolated value was $M(10^{23})$, computed by Helfgott and Thompson \cite{HT}, so these values extend the record by two orders of magnitude.
   The new computations took $7.0$ days and $34.6$ days, respectively, on the machine described in Section \ref{isolatedresults}.
   The $10^{25}$ computation required sieving through $13\,694\,622\,981\,236\,974$, slightly beyond the $10^{16}$ endpoint of the full interval computation in \cite{Hur18}.

   The isolated-value calculations above use the classical $O(x^{2/3+\varepsilon})$ framework of \cite{DR}, \cite{EK}, and \cite{Hur18}.
   This is not the best currently known exponent.
   Helfgott and Thompson introduced an elementary algorithm with runtime $O(x^{3/5}(\log x)^{3/5+\varepsilon})$, and used it to set the previous record \cite{HT}.
   Subsequent work gives further elementary asymptotic improvements for the Mertens function \cite{HKM}.
   The contribution here is not a new exponent.
   It is a reproducible practical computation at a new scale, together with the implementation choices and validation needed to make the result checkable.

   The computations above rely on several implementation choices, in particular inclusion-exclusion reductions that decrease the number of visited summands, a parallel bucket scheduler for large primes, and compressed Mertens storage.
   Together, these choices make the larger isolated-value computations possible.
   They also substantially improve the standalone segmented sieve, which computes all Mertens values through $10^{16}$ in approximately $7.4$ days rather than the $7.5$ months reported in \cite{Hur18}.
   The values of $\mu(n)$ and $M(n)$ appear throughout number theory, and many applications need them over long intervals rather than at isolated points.
   The sieve is therefore also released as a reusable tool.

   The paper also includes a comparison with the asymptotically superior Helfgott-Thompson algorithm.
   Both methods have set records, and both are practical at scale.
   Each implementation is optimized in its own right, and the aim is to understand which method to prefer at reachable sizes.
   Since the two implementations stress different parts of the hardware, the answer could depend on the machine as well as the input.

   Section \ref{history} reviews previous interval and isolated-value computations.
   Section \ref{combinatorial} derives the classical formula used here, and Section \ref{ie} gives the inclusion-exclusion reductions.
   Section \ref{sieve} describes the segmented M\"obius and Mertens sieve.
   Sections \ref{algorithm} and \ref{implementation} describe the phase structure and the principal implementation choices.
   Section \ref{results} gives the computed values and timings, and Section \ref{validation} describes the validation checks.
   Section \ref{htcompare} compares the practical behavior of the classical and Helfgott-Thompson implementations as a hardware-dependent algorithm selection problem, and Appendix \ref{gpuvariant} describes experimental GPU variants of both isolated-value implementations.
   Section \ref{codeavailability} describes the public code release.

   \section{Historical Context}
   \label{history}

   Computations of the Mertens function fall naturally into interval computations and isolated-value computations.
   In an interval computation, every value $M(n)$ up to an endpoint is produced.
   Mertens computed $M(n)$ by hand through $10^4$ in 1897 and, based on that table, conjectured that $|M(x)|<\sqrt{x}$ for all $x>1$ \cite{Mer}.
   Von Sterneck continued the tabulations in a series of papers, eventually reaching selected values up to $5\cdot 10^6$, and proposed the stronger bound $|M(x)|<\frac{1}{2}\sqrt{x}$ for $x>200$ \cite{St1}, \cite{St2}.

   Machine computation transformed the subject.
   Neubauer computed all values through $10^8$ in 1963 and, sampling beyond that range, disproved von Sterneck's bound near $7.76\cdot 10^9$ \cite{Neu}.
   Cohen and Dress computed through $7.8\cdot 10^9$, locating the smallest such counterexample \cite{CD}.
   The Mertens conjecture itself was disproved by Odlyzko and te Riele in 1985, using analytic methods, notably without producing an explicit counterexample \cite{OtR}.
   Their techniques were later revisited in \cite{Hur18} to improve the associated bounds on $M(x)/\sqrt{x}$.
   Dress extended the interval computations to $10^{12}$ \cite{Dre}, and Lioen and van de Lune reached $10^{13}$ with vectorized sieving \cite{LL}.
   Kotnik and van de Lune computed all values through $10^{14}$ in 2003 \cite{KL}.
   This was extended to $10^{16}$ in 2018 \cite{Hur18}, recording all extrema, zeros, and regularly spaced values.
   The segmented sieve used in the present work is descended from that implementation, but the addition of a parallel bucket scheduler for large primes substantially changes its behavior on long intervals.

   The isolated-value problem has a different history.
   Rather than producing every value below $x$, isolated-value methods rest on combinatorial identities that need only a small fraction of the M\"obius values below $x$.
   Lehman gave an early version of the classical method in 1960 \cite{Leh}.
   In 1996, Del\'eglise and Rivat gave an algorithm with runtime $O(x^{2/3}(\log\log x)^{1/3})$ and space $O(x^{1/3}(\log\log x)^{2/3})$ \cite{DR}, and used it to compute
   $$ M(10^{16})=-3\,195\,437. $$
   The first improvement came in 2011, when Kuznetsov used a GPU variant of the same classical algorithm to compute all powers of $10$ through $10^{22}$ \cite{EK}, reporting
   $$ M(10^{22})=-2\,061\,910\,120. $$
   The computations in \cite{Hur18} also used a version of this algorithm for isolated values, including powers of two through $2^{73}$, falling just short of $10^{22}$.

   Helfgott and Thompson later introduced a new elementary algorithm with runtime $O(x^{3/5}(\log x)^{3/5+\varepsilon})$ and space $O(x^{3/10}(\log x)^{13/10})$.
   As noted in \cite{HT}, this was the first improvement in the exponent of $x$ for an elementary algorithm since related combinatorial work on prime counting in 1985.
   They reported computations of $M(x)$ for $x=2^n$, $n\leq 75$, and in 2021 computed all powers of $10$ through $10^{23}$, including
   $$ M(10^{23})=62\,467\,771\,689. $$

   There are also asymptotically faster directions beyond the two implementations explored in this paper.
   Hirsch, Kessler, and Mendlovic described an elementary framework for computing the prime counting function $\pi(x)$ in $O(x^{1/2+\varepsilon})$ time and applied the same techniques to the Mertens function \cite{HKM}.
   For $M(x)$, their space can be reduced to $O(x^{1/3+\varepsilon})$ while the runtime rises to $O(x^{8/15+\varepsilon})$.
   Analytic methods related to the Lagarias-Odlyzko algorithm for prime counting give another asymptotically favorable direction \cite{LO}.
   To the author's knowledge, neither approach has been implemented for $M(x)$.

   The present work focuses on the practical side of isolated-value computation, studying how far the range can be pushed when every stage, from summation identities down to parallel scheduling, is designed as one computation.

   \section{Combinatorial Algorithm}
   \label{combinatorial}

   Throughout the remainder of the paper, arguments obtained by division are implicitly rounded down.
   For example, $M(x/n)$ denotes $M(\lfloor x/n\rfloor)$.
   The algorithmic setup starts with the well-known identity \cite{Leh}
   $$ \sum_{n \leq x} M(x/n) = 1. $$
   Grouping together terms with the same value of $\lfloor x/n \rfloor$, the identity becomes
   \begin{align*}
     \sum_{n \leq \kappa_x} M(x/n)
        &= 1 - \sum_{n \leq \nu_x} \left( \left\lfloor \frac{x}{n} \right\rfloor - \left\lfloor \frac{x}{n+1} \right\rfloor \right) M(n) \\
        &= 1 + \kappa_x M(\nu_x) - \sum_{n \leq \nu_x} \left\lfloor \frac{x}{n} \right\rfloor \mu(n),
   \end{align*}
   where
   $$ \nu_x = \lfloor \sqrt{x}\, \rfloor, \quad \kappa_x = \left\lfloor \frac{x}{\nu_x+1} \right\rfloor. $$
   The form of the identity involving $\mu(n)$ is computationally preferable both because the M\"obius values can be stored in one byte, and the zero terms can be skipped.

   As in \cite{Hur18}, fix $u$ and for an argument $y$ write
   $$ S_1(y,u)=\sum_{y/u < n \leq \kappa_y} M(y/n),
      \qquad
      S_2(y)=\sum_{n \leq \nu_y} \left\lfloor \frac{y}{n} \right\rfloor \mu(n). $$
   For an argument $y$ with $\nu_y < u$, define
   \begin{equation}
   \label{Sdef}
     S(y,u)=1-S_1(y,u)+\kappa_yM(\nu_y)-S_2(y).
   \end{equation}
   Then, for $\nu_x < u < x$,
   \begin{equation}
   \label{Ssumidentity}
      \sum_{k \leq x/u} M(x/k) = S(x,u).
   \end{equation}
   Applying generalized M\"obius inversion gives the following formula.

   \begin{theorem}
   \label{mainformula}
   For $\nu_x < u < x$,
   $$ M(x) = \sum_{k \leq x/u} \mu(k)S(x/k,u). $$
   \end{theorem}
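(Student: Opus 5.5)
The plan is to derive the theorem from the summatory identity (\ref{Ssumidentity}) applied at the arguments $y=\lfloor x/k\rfloor$, followed by a Dirichlet-convolution (generalized M\"obius inversion) argument. Throughout, $u$ is taken to be a positive integer, as it is in the algorithm. The one real issue is that (\ref{Ssumidentity}) is stated only for $\nu_y<u<y$, whereas for large $k$ the argument $y=\lfloor x/k\rfloor$ may satisfy $y\le u$.

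\textbf{Step 1: extend (\ref{Ssumidentity}) to every $y\ge 1$ with $\nu_y<u$.} Start from the grouped identity
$$\sum_{n\le\kappa_y}M(y/n)=1+\kappa_yM(\nu_y)-S_2(y).$$
Substituting it into (\ref{Sdef}) gives
$$S(y,u)=\sum_{n\le\kappa_y}M(y/n)-S_1(y,u)=\sum_{n\le \min(\kappa_y,\,y/u)}M(y/n).$$
Since $u\ge\nu_y+1$, we have $\lfloor y/u\rfloor\le\lfloor y/(\nu_y+1)\rfloor=\kappa_y$. Hence
$$S(y,u)=\sum_{k\le y/u}M(y/k)$$
for every such $y$. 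When $y<u$ both sides are empty sums, equal to $0$. This is the step I expect to need the most care, namely checking that no boundary terms are lost. The rest is formal.

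\textbf{Step 2: check the hypothesis at each argument.} For every $k\le x/u$, put $y=\lfloor x/k\rfloor$. Then $\nu_y\le\nu_x<u$, so Step 1 applies, and we may replace each $S(x/k,u)$ by a sum of Mertens values.

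\textbf{Step 3: substitute and invert.} Using Step 1 at each $y=\lfloor x/k\rfloor$,
$$\sum_{k\le x/u}\mu(k)S(x/k,u)=\sum_{k\le x/u}\mu(k)\sum_{j\le \lfloor x/k\rfloor/u}M\bigl(\lfloor\lfloor x/k\rfloor/j\rfloor\bigr).$$
We then use two facts about nested floors. First, $\lfloor\lfloor x/k\rfloor/j\rfloor=\lfloor x/(kj)\rfloor$. Second, since $ju$ is an integer, $ju\le\lfloor x/k\rfloor$ if and only if $kj\le x/u$.

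Writing $m=kj$, the double sum therefore becomes
$$\sum_{m\le x/u}M(x/m)\sum_{k\mid m}\mu(k).$$
The inner sum is $1$ when $m=1$ and $0$ otherwise. Since $1\le x/u$ because $u<x$, only the $m=1$ term survives, and the whole expression equals $M(x)$, as claimed.
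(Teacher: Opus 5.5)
Your proof is correct and takes the same route as the paper, which obtains the theorem by applying generalized M\"obius inversion to the identity $\sum_{k\le y/u}M(y/k)=S(y,u)$ at the arguments $y=\lfloor x/k\rfloor$. You additionally spell out the Dirichlet-convolution bookkeeping (nested floors and integrality of $u$) and the boundary case that the paper leaves implicit. In fact, with $u$ an integer and $k\le x/u$ one always has $\lfloor x/k\rfloor\ge u$, so the only argument outside the stated range $\nu_y<u<y$ is $y=u$, and your Step 1 covers it.
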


   The cost of evaluating Theorem \ref{mainformula} is the cost of sieving up to $u$, together with the cost of evaluating the sums in $S(x/k,u)$ for $k\leq x/u$.
   Ignoring logarithmic factors, the total work is
   $$ O\bigg(u + \sum_{k \leq x/u} \sqrt{x/k}\bigg) = O(u + x/\sqrt{u}). $$
   Balancing these terms gives $u=O(x^{2/3})$ and total runtime $O(x^{2/3+\varepsilon})$.
   Since only square-free $k$ contribute to the outer sum, about $6/\pi^2\approx 60.8\%$ of the outer terms remain.

   The split point $\nu_y$ in (\ref{Sdef}) need not be exactly $\lfloor\sqrt{y}\rfloor$.
   More generally, one may choose any integer split point $1\leq \nu_y<y$ and define $\kappa_y=\lfloor y/(\nu_y+1)\rfloor$ as before.
   In the implementation below, $\nu_y$ is biased by a constant factor relative to $\sqrt{y}$.
   This changes the amount of work assigned to $S_1$ and $S_2$, but not the underlying identity.
   Increasing $\nu_y$ adds terms to $S_2(y)$ and removes terms from $S_1(y,u)$.
   Similarly, the main sieve bound $u$ is chosen as a constant multiple of the asymptotic balance.
   The constants are tunable parameters, chosen to balance the costs of sieving, quotient computation, and memory traffic.

   \section{Inclusion-Exclusion Reductions}
   \label{ie}

   The formula in Theorem \ref{mainformula} can be implemented directly, but many of its terms carry redundant work.
   Its outer sum and the sum in $S_2(y)$ are both weighted by the M\"obius function.
   A direct implementation already skips indices for which $\mu(n)=0$, but it still visits many nonzero terms whose contributions can be grouped according to divisibility by the first few primes.
   Inclusion-exclusion makes the resulting cancellations explicit.
   The resulting formulas restrict the indices that must be visited and replace several nearby terms by a single simpler summand.
   As estimated below, these reductions decrease both main summation components by a large amount.
   The general identity is the following.

   \begin{lemma}
   \label{ielemma}
   Let $P$ be a finite set of primes and let $Q=\prod_{p\in P}p$.
   For any arithmetic function $f$ with $f(n)=0$ for $n>N$,
   $$ \sum_{n \leq N}\mu(n)f(n)
      =
      \sum_{d\mid Q}\mu(d)
      \sum_{\substack{m \leq N/d\\(m,Q)=1}}\mu(m)f(dm). $$
   \end{lemma}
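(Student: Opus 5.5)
The plan is to prove Lemma \ref{ielemma} by factoring each index $n$ according to its $Q$-part and then regrouping the sum.

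\emph{Step 1 (drop vanishing terms).} Only square-free $n$ contribute, since $\mu(n)=0$ otherwise. For square-free $n\leq N$, set $d=(n,Q)$ and $m=n/d$. Then $d\mid Q$, and $(m,Q)=1$: any prime $p\in P$ dividing $m$ already divides $d$, so $p^2\mid n$, which square-freeness forbids.

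\emph{Step 2 (bijection).} Conversely, take any pair $(d,m)$ with $d\mid Q$, $(m,Q)=1$, and $dm\leq N$. Then $(dm,Q)=d$, so $n=dm$ recovers the pair. This gives a bijection between positive integers $n\leq N$ (square-free or not) and such pairs $(d,m)$. The condition $dm\leq N$ is the same as $m\leq N/d$, and since $m$ is an integer the floor convention causes no trouble.

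\emph{Step 3 (multiplicativity of $\mu$).} Because $(d,m)=1$, we have $\mu(dm)=\mu(d)\mu(m)$. This covers the non-square-free $n$ as well, because then $\mu(d)\mu(m)=0$ holds on the right-hand side too. Hence
$$ \sum_{n\le N}\mu(n)f(n)=\sum_{d\mid Q}\ \sum_{\substack{m\le N/d\\(m,Q)=1}}\mu(d)\mu(m)f(dm), $$
and pulling $\mu(d)$ out of the inner sum gives the stated identity. The hypothesis $f(n)=0$ for $n>N$ is not actually needed for the finite-sum identity. It only guarantees that the truncation at $N$ loses nothing when the lemma is later applied to infinite sums.

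\emph{Main obstacle.} The only step needing care is Step 2, namely that the map $n\mapsto((n,Q),n/(n,Q))$ hits every admissible pair exactly once. This holds precisely because of the condition $(m,Q)=1$: without it, different pairs would give the same $n=dm$. The rest is the multiplicativity of $\mu$ on coprime arguments.
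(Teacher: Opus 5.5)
Your route is the same as the paper's: factor each index as $n=dm$ with $d\mid Q$ and $(m,Q)=1$, then use $\mu(dm)=\mu(d)\mu(m)$. However, Step 2 as written is false. The admissible pairs $(d,m)$ are \emph{not} in bijection with all $n\le N$, ``square-free or not''.

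\begin{itemize}
\item If $p^2\mid n$ for some $p\in P$, then $n$ has no representation $n=dm$. Since $d\mid Q$ with $Q$ square-free, and $p\nmid m$, the prime $p$ divides $dm$ at most once.
\item For instance, take $P=\{2\}$ and $n=4$. Your map gives $d=(4,2)=2$ and $m=2$, and this pair is not admissible. No $d\in\{1,2\}$ and odd $m$ satisfy $dm=4$.
\item Your own Step 1 needed square-freeness to get $(m,Q)=1$, so the parenthetical contradicts it.
\end{itemize}

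What is true is that $(d,m)\mapsto dm$ is injective, which your computation $(dm,Q)=d$ shows. Its image is $\{n\le N : p^2\nmid n \text{ for all } p\in P\}$. As a result, the remark in Step 3 only covers the non-square-free $n$ inside this image, where $\mu(m)=0$. The $n$ outside the image simply do not appear on the right-hand side. So the delicate point is not the one you flag in your last paragraph. Every admissible pair is hit exactly once, as you say. The real issue is that $(d,m)\mapsto dm$ is not surjective onto the left-hand index set.

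The repair is one line. Every $n$ outside the image is non-square-free, so $\mu(n)=0$ and it contributes nothing to the left side. Equivalently, and this is how the paper argues, restrict both sides at the start to nonzero terms. Square-free $n\le N$ correspond bijectively to pairs with $d\mid Q$, $(m,Q)=1$ and $m$ square-free. Pairs with $\mu(m)=0$ vanish on the right. With that fix your argument is complete and coincides with the paper's.

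Your side remark is correct: the hypothesis $f(n)=0$ for $n>N$ is not needed for the lemma itself, because $dm\le N$ throughout. Where it is actually used is in passing to (\ref{formula2}) and (\ref{formula6}). There the inner ranges are extended to $n\le N$, so the arguments $2n$, $3n$, $6n$ may exceed $N$.
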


   This follows immediately from inclusion-exclusion on $P$: every square-free $n$ factors uniquely as $n=dm$, where $d\mid Q$ and $(m,Q)=1$, giving $\mu(n)=\mu(d)\mu(m)$.

   The two cases used below are
   \begin{equation}
   \label{formula2}
     \sum_{n \leq N} \mu(n)f(n)
       =
       \sum_{\substack{n \leq N\\(n,2)=1}}\mu(n)(f(n)-f(2n))
   \end{equation}
   and
   \begin{equation}
   \label{formula6}
     \sum_{n \leq N} \mu(n)f(n)
       =
       \sum_{\substack{n \leq N\\(n,6)=1}}\mu(n)(f(n)-f(2n)-f(3n)+f(6n)).
   \end{equation}
   To use these reductions with functions that do not vanish past the summation range, first define
   $$ f_N(n)=
      \begin{cases}
      f(n), & 1\leq n\leq N,\\
      0, & n>N.
      \end{cases} $$
   Applying Formula (\ref{formula2}) to this truncated function, note that $f_N(n)=f(n)$ throughout the summation range, while $f_N(2n)=f(2n)$ only for $n\leq \lfloor N/2\rfloor$.
   Hence, for any function $f$,
   \begin{equation}
   \label{formula2split}
     \sum_{n\leq N}\mu(n)f(n)
     =
     \sum_{\substack{n\leq \lfloor N/2\rfloor\\(n,2)=1}}\mu(n)(f(n)-f(2n))
     +
     \sum_{\substack{\lfloor N/2\rfloor<n\leq N\\(n,2)=1}}\mu(n)f(n).
   \end{equation}
   The same truncation scheme is used in the other reductions below.
   The two sums in (\ref{formula2split}) are called the \emph{combined range} and the \emph{single range}, respectively.

   \begingroup
   \setlength{\abovedisplayskip}{5pt}
   \setlength{\belowdisplayskip}{5pt}
   \setlength{\abovedisplayshortskip}{3pt}
   \setlength{\belowdisplayshortskip}{4pt}

   With this notation, the $S_1$ part of the outer computation is
   $$ \sum_{k\leq x/u}\mu(k)S_1(x/k,u), $$
   and applying (\ref{formula2split}) transforms this sum into
   $$ \sum_{\substack{k\leq x/(2u)\\(k,2)=1}}\mu(k)(S_1(x/k,u)-S_1(x/(2k),u))
      +
      \sum_{\substack{x/(2u)<k\leq x/u\\(k,2)=1}}\mu(k)S_1(x/k,u). $$
   In the combined range of this transformation, write $y=x/k$.
   Rewriting $S_1(y/2,u)$ with an even index cancels the common terms, giving
   \begin{equation}
   \label{s1parity}
   S_1(y,u)-S_1(y/2,u)
    =
    \sum_{\substack{y/u<n\leq \kappa_y\\(n,2)=1}}M(y/n)
    -
    \sum_{\substack{\kappa_y<n\leq 2\kappa_{y/2}\\2\mid n}}M(y/n).
   \end{equation}
   This reduces the number of terms in two ways.
   First, the outer summation over $k$ is restricted to $(k,2)=1$.
   Second, in the combined range, the middle part of the $S_1$ sum cancels, and the two remaining pieces each visit only every other index.

   The sum involving $S_2$ has two separate reductions.
   One is applied inside the definition of $S_2$ itself.
   The other is applied to the outer sum, similar to the $S_1$ reduction above.
   For the inner reduction, the truncated version of Formula (\ref{formula6}) is applied, analogous to (\ref{formula2split}).
   Writing $j$ for the inner summation index, this restricts $j$ to values coprime to $6$ and replaces the $\lfloor y/j\rfloor$ term in the summand by
   \begin{equation}
   \label{s2coprime6}
   \left\lfloor \frac{y}{j} \right\rfloor
   - \left\lfloor \frac{y}{2j} \right\rfloor
   - \left\lfloor \frac{y}{3j} \right\rfloor
   + \left\lfloor \frac{y}{6j} \right\rfloor.
   \end{equation}
   The original $S_2$ sum only includes indices up to $\nu_y$.
   Thus the terms in (\ref{s2coprime6}) with $2j$, $3j$, and $6j$ stop contributing once their indices exceed $\nu_y$.
   This introduces breakpoints at the corresponding cutoff values $\nu_y/2$, $\nu_y/3$, and $\nu_y/6$.

   The outer $S_2$ sum is then split by Formula (\ref{formula2split}) as before.
   Writing $y=x/k$, the combined range contains $S_2(y)-S_2(y/2)$, while the single range contains only $S_2(y)$.
   In the combined range, the breakpoints for $S_2(y)$ and $S_2(y/2)$ are merged.
   Let $A=\nu_y$ and $B=\nu_{y/2}$.
   The values to be sorted are
   $$ A/6,\ A/3,\ A/2,\ A
      \quad\text{and}\quad
      B/6,\ B/3,\ B/2,\ B, $$
   Since $B\approx A/\sqrt{2}$, the merged values interleave in a fixed order, as shown in the tables below.

   On each resulting interval the summand is an elementary function of $q=\lfloor y/j\rfloor$.
   The two tables give the formulas used in the implementation for the combined and single ranges.
   These are exact simplifications of the inclusion-exclusion summands, with each range restricted to $j$ coprime to $6$.

   \vspace{7pt}
   {\fontsize{8}{8}\selectfont
   \setlength{\tabcolsep}{2pt}
   \renewcommand{\arraystretch}{1.3}
   \par\noindent
   \begin{minipage}[t]{0.64\textwidth}
   \centering
   \begin{adjustbox}{max width=\linewidth}
   \begin{tabular}{|r@{}c@{}l|l|}
   \hline
   \multicolumn{3}{|c|}{range of $j$} & \multicolumn{1}{c|}{summand}\\
   \hline
   $1\leq{}$ & $j$ & ${}\leq B/6$ & $\lfloor q/4\rfloor+(q\bmod 2)-\lfloor q/12\rfloor-(\lfloor q/3\rfloor\bmod 2)$\\
   $B/6<{}$ & $j$ & ${}\leq A/6$ & $\lfloor q/4\rfloor+(q\bmod 2)-[q\bmod 6\geq 3]$\\
   $A/6<{}$ & $j$ & ${}\leq B/3$ & $\lfloor q/4\rfloor+(q\bmod 2)-\lfloor(q+3)/6\rfloor$\\
   $B/3<{}$ & $j$ & ${}\leq A/3$ & $\lfloor q/4\rfloor+(q\bmod 2)-\lfloor q/3\rfloor$\\
   $A/3<{}$ & $j$ & ${}\leq B/2$ & $\lfloor q/4\rfloor+(q\bmod 2)$\\
   $B/2<{}$ & $j$ & ${}\leq A/2$ & $q\bmod 2$\\
   $A/2<{}$ & $j$ & ${}\leq B$ & $\lceil q/2\rceil$\\
   $B<{}$ & $j$ & ${}\leq A$ & $q$\\
   \hline
   \end{tabular}
   \end{adjustbox}
   \par\vspace{2pt}
   combined range: $S_2(y)-S_2(y/2)$
   \end{minipage}\hfill
   \begin{minipage}[t]{0.33\textwidth}
   \centering
   \begin{adjustbox}{max width=\linewidth}
   \begin{tabular}{|r@{}c@{}l|l|}
   \hline
   \multicolumn{3}{|c|}{range of $j$} & \multicolumn{1}{c|}{summand}\\
   \hline
   $1\leq{}$ & $j$ & ${}\leq A/6$ & $\lceil q/2\rceil-\lceil\lfloor q/3\rfloor/2\rceil$\\
   $A/6<{}$ & $j$ & ${}\leq A/3$ & $\lceil q/2\rceil-\lfloor q/3\rfloor$\\
   $A/3<{}$ & $j$ & ${}\leq A/2$ & $\lceil q/2\rceil$\\
   $A/2<{}$ & $j$ & ${}\leq A$ & $q$\\
   \hline
   \end{tabular}
   \end{adjustbox}
   \par\vspace{2pt}
   single range: $S_2(y)$
   \end{minipage}\par
   }
   \endgroup
   \vspace{7pt}

   A limiting density count of the surviving summands gives a useful estimate of the savings, without modeling the cost of each summand.
   Relative to a direct implementation of Theorem \ref{mainformula} that already skips terms with $\mu(n)=0$ wherever $\mu$ appears, this predicts a reduction by a factor of $3/(3-\sqrt2)\approx 1.89$ in the $S_1$ part and by a factor of $3$ in the $S_2$ part.
   These factors come from the summands skipped by the parity and coprime-to-$6$ restrictions, together with the cancellation in the paired range of (\ref{s1parity}).
   For the split points used in the computations, the two components together give a summand-count reduction just below a factor of $2.5$.

   \section{Segmented M\"obius and Mertens Sieving}
   \label{sieve}

   The computation requires repeated access to $\mu(n)$ and $M(n)$ on successive segments of integers up to the sieve bound $u$.
   These values are produced by a segmented M\"obius sieve followed by a segmented prefix sum.
   The same implementation also serves as a standalone tool for producing long intervals of $\mu(n)$ and $M(n)$.
   Internally, the sieve partitions each segment into fixed-size pieces called \emph{sub-segments}, chosen so that the simultaneously active sub-segments across threads fit in cache.
   It has four main components: byte-encoded M\"obius sieving, a stencil for the first primes, bucket scheduling for large primes, and a compressed prefix sum for the resulting Mertens values.
   The first two descend from the implementation in \cite{Hur18}: the byte-logarithm encoding is essentially the same method, and the stencil uses the same pre-sieved local structure.
   The new practical contributions are the parallel bucket scheduler, the compressed Mertens prefix representation at the scales used here, and the tuning of segment sizes and parallel work distribution around these choices.
   For large inputs, the bucket scheduler is the main driver of the sieve's performance improvement.
   Below, \emph{small primes} are those handled by unrolled code after the stencil is copied, \emph{medium primes} are the remaining primes sieved directly, and \emph{large primes} are those handled by the bucket scheduler.

   \subsection{Byte-Encoded M\"obius Sieving}

   The M\"obius sieve stores one signed byte per value in the current segment during the sieving phase.
   Rather than storing the product of prime factors, it stores a compact approximation to the sum of logarithms of prime factors, together with parity information.
   As in \cite{EK} and \cite{Hur18}, a logarithmic weight is added to each multiple of $p$ in the sieve range, and multiples of $p^2$ are marked as zero.
   Here the weight is $\lceil \log_2 p \rceil \,|\, 1$, where $|$ denotes bitwise OR.
   Those works use $\lfloor \log_2 p \rfloor$, whereas taking the ceiling makes the finalization comparison exact and removes the encoding collision discussed below.
   The stored value is later finalized by comparison with the size of the integer being sieved.

   \subsection{Stencil and Unrolled Small Primes}

   The first primes and prime powers are handled by a precomputed stencil of period $13860=2^2\cdot 3^2\cdot 5\cdot 7\cdot 11$, which is copied into each new segment.
   This copy is done in large tiles so that it runs at high memory bandwidth.
   After the stencil has been copied, primes up to $353$ are applied through unrolled code.

   \subsection{Large-Prime Scheduling}

   Large primes require additional treatment.
   For a sub-segment being processed, most primes larger than its length do not hit that sub-segment at all.
   With fixed-size sub-segments, iterating through all such primes for every sub-segment wastes time and can raise the sieving cost from near-linear to about $O(u^{3/2})$.
   The bucket scheduler avoids rediscovering these sparse hits by carrying each large prime forward from one contributing sub-segment to the next.
   This follows a suggestion in the concluding remarks of \cite{Hur18}, where bucket scheduling was proposed as a way to extend the interval computation beyond $10^{16}$.

   Each large prime is stored in the bucket corresponding to the next sub-segment that contains a multiple of that prime.
   When a sub-segment is processed, each stored prime is applied at its next hit and then forwarded to the bucket of its following hit.

   The buckets are stored in a circular buffer, with a power-of-two number of buckets.
   If the sub-segment length is $L$ and the circular buffer has $B$ buckets, then the scheduler must satisfy $\sqrt{u} < BL$,
   since $\sqrt{u}$ is the largest prime that can appear in the segmented sieve.
   The power-of-two choice keeps the bucket index arithmetic simple, since wraparound can be handled by masking instead of integer division.

   \subsection{Finalization}

   To finalize the M\"obius values, let $S=\sum(\lceil\log_2 p\rceil\,|\,1)$ be the sum accumulated for a square-free $n$.
   Then $n$ is fully factored within the sieve when $S>\lfloor\log_2 n\rfloor$, and otherwise retains a single unsieved prime cofactor. Since every log value has its least significant bit set to $1$, the sign follows from the parity of $S$, giving $\mu(n)=1-2(S\bmod 2)$ when $S>\lfloor\log_2 n\rfloor$ and $2(S\bmod 2)-1$ otherwise.
   The threshold $\lfloor\log_2 n\rfloor$ is constant between consecutive powers of two, so finalization is applied separately on each such interval.
   Under the ceiling weight used here, this comparison is exact.
   Each weight is at most $2\log_2 p$, so an unsieved prime cofactor $q>\sqrt{u}$, which exceeds $n/q$, gives $S\leq 2\log_2(n/q)\leq \log_2 n$ and hence $S\leq\lfloor\log_2 n\rfloor$.
   Conversely, a fully sieved square-free $n>2$ has $S\geq\lceil\log_2 n\rceil>\lfloor\log_2 n\rfloor$.
   Unlike the floor encoding of \cite{Hur18}, whose first known collision is near $1.15\cdot 10^{18}$, this places no nontrivial bound on the sieve endpoint.
   The only requirement is that the accumulated byte not overflow its seven value bits, which holds for every $u<2^{64}$.
   Finalization is performed per sub-segment immediately after the large-prime hits are applied, keeping the values in-cache, instead of a separate pass over the segment.
   Non-square-free $n$ already have $\mu$ marked as $0$, and $\mu(1)$, $\mu(2)$ are set directly.

   \subsection{Compressed Mertens Storage}

   Once $\mu(n)$ has been computed over an interval, the corresponding values of $M(n)$ are obtained by a prefix sum and stored in compressed form.
   The compressed representation is
   $$ M(t) = C\!\left(\left\lfloor\frac{t-L}{H}\right\rfloor\right) + R(t-L), $$
   where $L$ is the first index in the current interval, $H$ is a power-of-two stride, and $C$ and $R$ are the \emph{coarse} and \emph{residual} arrays.
   The array $C$ stores a full Mertens value once every $H$ positions, while $R$ stores the signed byte offset from this coarse value.
   Thus a lookup of $M(t)$ requires access into two arrays.
   The stride $H$ can be tuned to the intended sieve range: larger values improve compression and cache behavior, while longer sieved ranges may require a smaller stride so that the byte residual remains valid.
   Choosing $H$ to be a power of two also keeps the coarse array index computation cheap.
   The signed-byte residual requires the change in $M$ over a stride-$H$ interval to remain in $[-128,127]$.

   The prefix sum from $\mu$ to this representation is done in three stages.
   First, within each interval of length $H$, the values of $\mu$ are prefix summed locally, producing the residual values relative to the start of that interval.
   This local scan is vectorized.
   Second, the total sum from each such interval is itself prefix summed in parallel.
   Third, these interval totals, together with the incoming value of $M$ from the previous segment, determine the coarse values stored in $C$.
   When only the compressed Mertens values are needed, the byte-logarithm finalization is folded into this scan, without materializing the values of $\mu$ all at once.

   \section{Isolated-Value Algorithm}
   \label{algorithm}

   The isolated-value computation uses the sieved values of $\mu$ and $M$ to evaluate the sum in Theorem \ref{mainformula}.

   \subsection{Parameter Choices}
   \label{parameterchoices}

   The identities above leave the split points and the sieve bound as the main tuning choices.
   For a given input $x$, the sieve bound is set to
   $$ u = \left\lceil f_x\cdot (x/\log\log x)^{2/3} \right\rceil, $$
   where $f_x=1.05$ for $x\le 10^{16}$, decreases linearly in $\log_{10}x$ to $0.75$ at $x=10^{22}$, and is held fixed beyond that point.
   For each argument $y$ in the calls to $S(y,u)$, the split point in (\ref{Sdef}) is
   $$ \nu_y = \lfloor c_x\sqrt{y}\rfloor, $$
   with the corresponding $\kappa_y=\lfloor y/(\nu_y+1)\rfloor$.
   The coefficient $c_x$ is held fixed at $1.5$.

   These choices are empirical and should be retuned on other machines.
   Broadly speaking, decreasing $f_x$ reduces the sieve and bucket scheduler cost, but leaves more work for summation terms that increasingly use the 128-bit quotient path.
   Increasing $c_x$ shifts work from $S_1$ to $S_2$.
   This can help because $S_1$ performs noncontiguous lookups of Mertens values, while $S_2$ streams through contiguous M\"obius values.
   The shift is useful only for modest changes around the square-root split, since otherwise $S_2$ itself becomes too long.
   Thus $c_x$ gives a narrower adjustment, shifting work inside each $S(y,u)$ evaluation without changing the sieve cost.
   The sieve bound factor $f_x$ is the broader tuning parameter.

   \subsection{Square-Free Indices and Stored Entries}

   The reductions of Section \ref{ie} leave only odd square-free indices $k\leq x/u$.
   For each such index, set $x_k=\lfloor x/k\rfloor$.
   The computation stores one running value for $S(x_k,u)$, initially set to $1$.
   A preliminary M\"obius sieve identifies these indices, and the code stores only the nonzero terms in the main arrays.
   Two index arrays, each the inverse of the other, translate between the original index $k$ and its compact position among the stored indices.

   Along with $x_k$, the values $\nu_{x_k}$, $\kappa_{x_k}$, $\lfloor x_k/u\rfloor$, and $2\kappa_{x_k/2}$ are stored.
   The last quantity is used in the parity-reduced $S_1$ range in Formula (\ref{s1parity}).
   Most entries in these arrays fit in 64 bits, and the few that do not are stored separately in 128-bit arrays.
   This separation keeps most of the computation on ordinary 64-bit arithmetic, while the 128-bit path is reserved for the largest values of $x_k$.

   \subsection{Main Computation}

   The main part of the computation updates the running values as the sieve advances.
   For each segment, the relevant inclusion-exclusion ranges from Section \ref{ie} are applied, subtracting the $S_1$ and $S_2$ contributions and adding the $\kappa M(\nu)$ term.
   This is organized into two phases.
   The phase split is part of the algorithm, while the storage widths and segment sizes used inside each phase are implementation choices.

   The first phase sieves only as far as needed for $\nu_x$.
   During this phase, both $\mu$ and $M$ are needed.
   The coarse Mertens values are stored as 16-bit integers until just before $|M(n)|$ first exceeds $2^{15}$, which occurs at $n=7\,613\,644\,886$ \cite{Hur18}, and are stored as 32-bit integers beyond that point.
   This improves cache behavior for the $S_1$ lookups, most of which use the 16-bit representation.

   The second phase continues from $\nu_x$ up to $u$, where the only remaining updates come from $S_1$ and only the Mertens values are needed.
   Thus the residual array in the compressed Mertens storage can reuse the array that held $\mu$, rather than storing a second array of the same size.
   The sieve also uses larger segments in this phase, since the $M$ lookups are already far enough apart that cache misses are unavoidable and larger segments let the bucket scheduler do more useful work per pass.

   \subsection{Recovery Pass}
   \label{recoverypass}

   Let $N = \lfloor x/u\rfloor$, and write $V_k = M(x/k)$.
   After the segment passes are complete, the computed quantities are
   $$ R_k = \begin{cases}
      S(x/k,u)-S(x/(2k),u), & k\leq N/2,\\
      S(x/k,u), & k>N/2,
      \end{cases} $$
   for odd square-free $k\leq N$.
   If one only wants $M(x)$, the $R_k$ can simply be summed with their $\mu(k)$ weights, using Formula (\ref{formula2split}) applied to Theorem \ref{mainformula}.
   Instead, the implementation uses the identity (\ref{Ssumidentity}) to form a triangular system of linear equations.
   This recovers many more $V_k$, which are used in the validation checks of Section \ref{recoveredvalues}.

   For each odd square-free $k\leq N$, the corresponding linear equation is
   $$ \sum_{\substack{q\leq N/k\\q\ {\rm odd}}}V_{kq}=R_k. $$
   This system is solved through back substitution, with the indices processed in decreasing order.
   The solved value of $V_1$ is the desired value $M(x)$.

   Not every $V_k$ appearing in this system can be solved for individually.
   Appendix~\ref{recoveredappendix} gives the exact recoverability criterion.

   The following pseudocode summarizes the isolated-value computation.

\vspace{-5pt}

   \begin{algorithm}
     \SetArgSty{}
     \DontPrintSemicolon
    Sieve $\mu$ on $[1,\lfloor x/u\rfloor]$ and determine the square-free indices $k$\;
   For each of these $k$, precompute $x_k$, $\nu_{x_k}$, $\kappa_{x_k}$, and other split points\;
   \For{segments up to $\nu_x$}{
     Sieve $\mu$ on the segment and form compressed $M$\;
     Apply all $S_2$ updates meeting this segment\;
      Apply all $S_1$ updates meeting this segment\;
      Add any $\kappa_{x_k} M(\nu_{x_k})$ terms whose $\nu_{x_k}$ lies in the segment\;
    }
    \For{large segments up to $u$}{
      Sieve byte-encoded values and form compressed $M$ in place\;
      Apply $S_1$ updates only\;
     }
    Run the Mertens value recovery pass through the square-free indices\;
   \end{algorithm}

\vspace{-16pt}

   \section{Implementation Details}
   \label{implementation}

   The preceding sections give the algorithmic structure.
   This section details implementation choices that affect hot loops and hardware-specific behavior.

   \subsection{Quotient Computation}
   \label{quotientcomp}

   The main loops spend much of their time computing the quotients in the $S_1$ and $S_2$ sums.
   For native 64-bit quotients, hardware division is relatively cheap on the tested Apple Silicon machines, while on many x86 machines it is not.
   For the largest arguments, however, the computation uses 128-bit quotients, which are slower still.
   The implementation therefore has a direct method and a division-free method.
   The direct method uses the hardware division instruction.
   The division-free method is always used for 128-bit quotients.
   For 64-bit quotients, it is optional and is used only when it is faster than native division on the target machine.

   For an argument $y$ in one of the sums, the division-free method has two parts.
   It sets the split point
   $$ n_0 = \left\lceil \sqrt[3]{2y}\,\right\rceil. $$
   For $n\leq n_0$, the \emph{Quotient Cache} precomputes multiplier data for a range of denominators, replacing each quotient by a 128-bit multiply and shifts, using the invariant-division method of Granlund and Montgomery \cite{GM}.
   For $n>n_0$, the \emph{Quotient Predictor} avoids most divisions by predicting each quotient from the two preceding values.
   If $q_n=\lfloor y/n\rfloor$, then the second-order finite difference of $y/n$ is approximately $2y/n^3$, which is less than $1$ once $n>n_0$.
   The resulting prediction for $q_n$ is
   \begin{equation}
   \label{qpred}
   q_{\mathrm{est}} = 2q_{n-1}-q_{n-2}
   \end{equation}
   and is verified by multiplying $q_{\mathrm{est}}$ by $n$.
   If the product is too large, the quotient is decremented.
   If increasing the quotient still leaves the product at most $y$, the quotient is incremented until the correct value is reached.
   This verification gives the unique integer $q$ satisfying $q\,n\leq y<(q+1)\,n$ for the denominator being checked, so the predictor changes only the cost of quotient evaluation.
   In the \emph{step-$1$} case, where consecutive denominators are visited, the correction in the range $n>n_0$ is always one of $-1$, $0$, $1$, or $2$.

   The $S_1$ loop uses the step-$1$ predictor when iterating over consecutive denominators, and a \emph{step-$2$} variant when restricted to one parity.
   The $S_2$ loop visits only indices coprime to $6$.
   Thus the denominator increments alternate between $2$ and $4$.
   The predictor has a separate variant for this pattern.
   Across a step of $4$, it first makes an unverified prediction for the missing quotient, then uses that value to make the step-$2$ prediction and verifies the result by multiplication.
   This \emph{alternating step-$2/4$} variant has a wider correction window, and the implementation uses the same verify-and-correct loop rather than assuming the step-$1$ bounds.

   The Quotient Predictor always verifies its estimate by multiplication.
   Thus the choice between the direct and division-free methods changes performance, but not correctness.
   In all variants used here, correction $0$ is the favorable case.
   For the step-$1$ case analyzed in Appendix \ref{qpredappendix}, the fractional-part heuristic predicts that corrections of size $1$ decay like $x^{-1/6}$, while corrections of size $2$ decay like $x^{-1/3}$.
   Thus the exact prediction frequency tends to $1$ in the model.
   Table \ref{table:qpred} gives the corresponding measured frequencies for the step-$1$ predictor over consecutive denominators in the range $\sqrt[3]{2x}<n\leq x^{2/3}$.
   Here a correction of $-1$ denotes one decrement, while corrections of $1$ and $2$ denote increments by the indicated amount.
   The rows through $10^{12}$ are exhaustive.
   The row for $10^{25}$ uses 50 million random denominators.

   \begin{table}[!ht]
   \begin{center}
   \begin{tabular}{ | c || r | r | r | r | }
   \hline
   $x$ & correction $-1$ & correction $0$ & correction $1$ & correction $2$\\
   \hline
   $10^{10}$ & $2.24\%$ & $95.49\%$ & $2.27\%$ & $0.0031\%$\\
   $10^{11}$ & $1.54\%$ & $96.90\%$ & $1.55\%$ & $0.0014\%$\\
   $10^{12}$ & $1.06\%$ & $97.88\%$ & $1.06\%$ & $0.00067\%$\\
   $10^{25}$ & $0.0071\%$ & $99.99\%$ & $0.0072\%$ & $0$\\
   \hline
   \end{tabular}
   \end{center}
   \caption{Step-$1$ Quotient Predictor correction frequencies.}
   \label{table:qpred}
   \end{table}

   \subsection{Compressed Mertens Lookups}

   The compressed Mertens representation described in Section \ref{sieve} is central to the performance of $S_1$.
   Its purpose is to trade a slightly more complicated lookup for a substantially smaller working set and less memory traffic.
   With stride $H$, the coarse array contributes one full Mertens value per $H$ entries, while the residual array contributes one byte per entry.
   For the stride $H=256$ used in the record runs, this is slightly more than one byte per entry, compared with four bytes per entry for an uncompressed 32-bit array.
   For this stride, the signed-byte residual remains well within range.
   A heuristic of Ng \cite{NG} models $\mu$ as independent random variables with $\Pr(\mu(n)\neq 0)=6/\pi^2$ and predicts
   $$ \max_{t\leq u}|M(t+H)-M(t)|\sim \sqrt{\frac{12H}{\pi^2}\log\left(\frac{u}{H}\right)} $$
   over blocks of length $H$ up to $u$.
   This heuristic places the expected overflow far beyond the sieved ranges used here.
   Nonetheless, the code checked for residual overflow through the value of $u$ used in the $M(10^{25})$ computation.

   A lookup now requires one access into the coarse array and one access into the residual array.
   In the tested configurations, this extra lookup cost is outweighed by the smaller working set and by the larger feasible segment sizes, which give the bucket scheduler more useful work per pass.
   The balance is machine-dependent.
   Slower random access makes the $S_1$ lookups more expensive, while shifting work toward $S_2$ increases quotient work and, at the top end, use of the 128-bit path.

   \subsection{Prefetching}

   At large $x$, the compressed arrays exceed cache, and the $S_1$ access pattern is not contiguous.
   In the predictor-based $S_1$ path, an unverified step-$10$ use of the Quotient Predictor gives a cheap estimate of a future lookup position.
   The distance was chosen empirically, long enough for the prefetch to be useful in the tested loops but short enough that the estimate stays close.
   Since the value is used only as a prefetch target, it does not need to be exact.
   The implementation issues prefetch hints for both the coarse and residual arrays at that predicted location.
   When the buckets store hit positions, the scheduler similarly prefetches these upcoming locations, which are explicit in its entry stream but not visible to hardware prefetchers.
   This has no effect on correctness, but helps hide memory latency when the lookup stream reaches DRAM.

   \subsection{Parallelism and Load Balancing}

   Parallelism is organized differently in the sieve and in the $S_1$ and $S_2$ updates.
   In the sieve, each thread owns a consecutive range of sub-segments inside the current segment, together with a private circular buffer.
   The thread processes this range in order.
   This preserves the next-hit state from one sub-segment to the next, avoids lock contention, and avoids recomputing large-prime positions when the thread advances.
   For the reported computations, the sub-segment length was $L=887\,040$, chosen so that the active sub-segments across worker threads fit in cache.
   The buffer used $B=512$ buckets.
   These values satisfy $\sqrt{u}<BL$ throughout the computations and, with the $f_x$ formula for the isolated-value computation, support inputs to roughly $x=5.9\cdot 10^{26}$.

   The large-prime buckets can be stored in two ways.
   One stores only the prime, recomputing the hit position by division when the bucket is processed.
   The other also stores the hit position.
   The reported computations used the prime-only version, which reduces bucket traffic.
   On other machines, storing the hit position as well may be faster, depending on the balance between memory traffic, core count, and division cost.

   The $S_2$ update is split more finely.
   The smallest stored square-free indices have much larger $S_2$ ranges than the later ones, and assigning one index to one worker gives poor load balance.
   To balance this work, the implementation creates a list of chunks.
   A chunk records an index $i$ and a subinterval $[a,b]$ of the current sieved segment.
   Large $S_2$ tasks are split into many chunks, while small tasks remain whole.
   These chunks are then processed with dynamic scheduling.

   Inside each $S_2$ chunk, the innermost $S_2$ loop visits the residue classes coprime to $6$ in a period of $36$.
   Each $S_2$ chunk is dispatched to a routine specialized for its mode, where a mode is one of the piecewise quotient summands listed in Section \ref{ie}.
   Thus the mode-specific summand is inlined at compile time, and the main remaining branch is the check for $\mu(j)=0$.

   In the chunked $S_2$ path, several chunks may contribute to the same stored value, so the partial sums are accumulated with synchronization.
   For 64-bit entries this is done with atomic updates.
   For the much smaller set of 128-bit entries a critical section is used.
   Since the 128-bit entries are few, this does not dominate the runtime.

   The $S_1$ ranges also vary widely.
   A chunked $S_1$ variant was tested, but in both phases it was slower than dynamic scheduling over whole entries.
   The implementation therefore uses the simpler $S_1$ scheduling.

   \subsection{Backend Choices}

   The code is written in C++17 and parallelized with OpenMP.
   Most of the algorithm is shared across machines, but a few low-level choices depend on the target architecture.
   The segmented sieve has SIMD finalization kernels for ARM and x86-family instruction sets, including NEON, SVE2, SSE2, AVX2, and AVX-512 when available.
   The isolated-value computation can use either the direct or division-free quotient methods described above.
   The timings reported below were collected on Apple Silicon machines, with the computational environments summarized alongside the timing data.

   \section{Computational Results}
   \label{results}

   \subsection{Isolated Values}
   \label{isolatedresults}

   The isolated-value implementation gives the values:
   \begin{align*}
     M(10^{24}) &= 7\,189\,337\,839,\\
     M(10^{25}) &= -258\,560\,632\,948.
   \end{align*}
   The timings below use the following two machines.
   The \emph{record machine} was an M3 Ultra Mac Studio with $512$ GB of memory, while the \emph{laptop machine} was an M2 Max MacBook Pro with $32$ GB of memory.
   Both used the NEON backend and direct division in the 64-bit quotient path.
   The large sieve segments were approximately $400$ GB and $12$ GB, respectively.
   On the record machine, the computation of $M(10^{24})$ took $7.0$ days, while $M(10^{25})$ took $34.6$ days.
   The latter computation required sieving through
   $$ u=13\,694\,622\,981\,236\,974, $$
   slightly beyond the endpoint $10^{16}$ reached in \cite{Hur18}.

   The tuning parameters of Section \ref{parameterchoices} were chosen on these machines.
   Different memory systems and division costs may favor different quotient paths, split constants, and segment sizes.
   The record machine timings benefit from high memory bandwidth and fast integer division, while the noncontiguous Mertens value lookups remain sensitive to memory latency.

   Table \ref{table:isolatedtimings} gives timings on both machines.
   The per-decade ratios are slightly above the $10^{2/3}\approx 4.64$ suggested by the leading $x^{2/3}$ term.
   This reflects the slowly growing factors hidden in the $O(x^{2/3+\varepsilon})$ notation.
   These include the memory-sensitive $S_1$ Mertens value lookups, bucket scheduler overhead in the sieve, and the growing use of the slower 128-bit quotient path.
   The row at $10^{23}$ also reproduces the independently computed Helfgott-Thompson value from \cite{HT}.

   \begin{table}[ht]
   \centering
   \begin{tabular}{ | c | r || r | c || r | c | }
   \hline
   \multirow{2}{*}{$x$} & \multirow{2}{*}{$M(x)$} & \multicolumn{2}{c||}{record machine} & \multicolumn{2}{c|}{laptop machine}\\
   \cline{3-6}
    &  & \multicolumn{1}{c|}{time} & ratio & \multicolumn{1}{c|}{time} & ratio\\
   \hline
   $10^{16}$ & $-3\,195\,437$ & $2.39$ s & --- & $6.79$ s & ---\\
   $10^{17}$ & $-21\,830\,254$ & $10.82$ s & $4.53$ & $31.9$ s & $4.71$\\
   $10^{18}$ & $-46\,758\,740$ & $49.9$ s & $4.61$ & $152.2$ s & $4.77$\\
   $10^{19}$ & $899\,990\,187$ & $237.6$ s & $4.76$ & $733.9$ s & $4.82$\\
   $10^{20}$ & $461\,113\,106$ & $1150$ s & $4.84$ & $0.99$ h & $4.85$\\
   $10^{21}$ & $-3\,395\,895\,277$ & $1.52$ h & $4.76$ & $4.81$ h & $4.86$\\
   $10^{22}$ & $-2\,061\,910\,120$ & $7.26$ h & $4.76$ & $23.3$ h & $4.84$\\
   $10^{23}$ & $62\,467\,771\,689$ & $1.45$ d & $4.79$ & $4.76$ d & $4.90$\\
   $10^{24}$ & $7\,189\,337\,839$ & $7.01$ d & $4.84$ & \multicolumn{1}{c|}{---} & ---\\
   $10^{25}$ & $-258\,560\,632\,948$ & $34.6$ d & ${{}^{\phantom{*}}}4.94^*$ & \multicolumn{1}{c|}{---} & ---\\
   \hline
   \end{tabular}
   \captionsetup{skip=2pt}
   \caption{Timings for the isolated-value implementation. Each ratio compares the runtime with that at the preceding power of\,\,\nobreak$10$.}
   \label{table:isolatedtimings}
   \end{table}
   
   The $10^{25}$ runtime, marked with an asterisk in Table \ref{table:isolatedtimings}, is likely a slight overestimate of the raw computation time.
   This run included checkpointing and residual-overflow checks whose costs were not measured separately, so the reported time is an end-to-end operational figure rather than a controlled scaling measurement.
   The timing is still included because it gives useful evidence that the implementation remains stable at the largest scale reached here.

   The laptop machine ratios run higher throughout because its narrower memory bandwidth and smaller cache make the memory-bound sieve and noncontiguous Mertens value lookups grow more steeply with $x$.
   At the largest inputs, this effect is compounded by the smaller final segment that its limited memory forces.

   \subsection{Scaling and Component Balance}

   Figure \ref{fig:balance} shows the timing breakdown through $10^{24}$, with the sieve, $S_1$, and $S_2$ totals aggregated across the 16- and 32-bit variants of the first phase and the larger-segment second phase.
   For large enough inputs, the tuned choices of $f_x$ and $c_x$ keep the relative timings of the three main components within a narrow band.
   As $x$ grows, the bucket scheduler becomes more important in the sieve, while a larger part of the summation moves onto the 128-bit path.
   The high-decade values of $f_x$ were chosen mainly to balance these two pressures.
   
   \newpage

   \begin{figure}[ht]
   \centering
   \includegraphics[width=.85\textwidth]{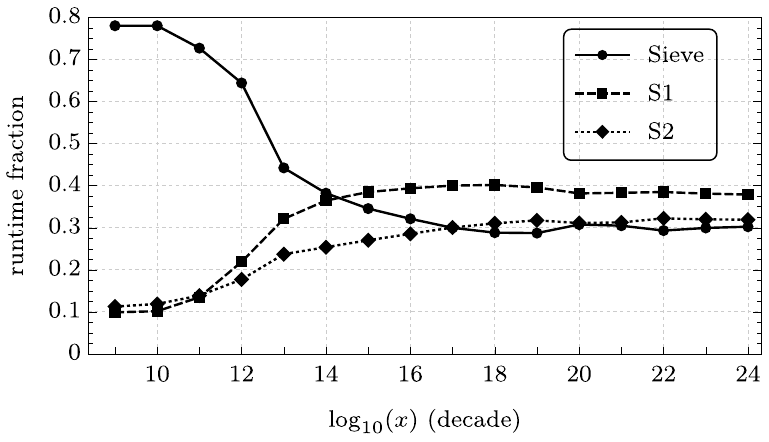}
   \captionsetup{skip=2pt}
   \caption{Fraction of runtime spent in each major subroutine.}
   \label{fig:balance}
   \end{figure}
   
   Figure \ref{fig:paramtune} shows the tuning sweep for the factor $f_x$ in the sieve bound $u$, holding $c_x$ fixed at $1.5$.
   For decades $10^9$ through $10^{22}$, the point marks the fastest tested value of $f_x$, and the band gives the $f_x$ values whose runtimes are within $2.5\%$ of the fastest.
   The final three decades computed are not included in this sweep.
   Their parameter choices came from smaller local timing tests.

   \begin{figure}[ht]
   \centering
   \includegraphics[width=.85\textwidth]{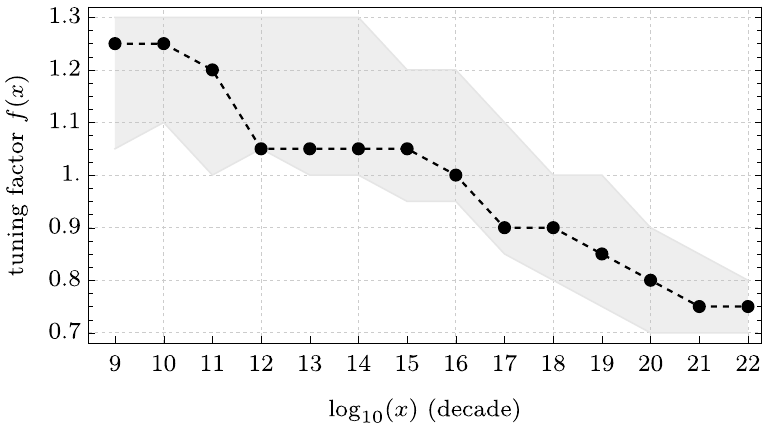}
   \captionsetup{skip=2pt}
   \caption{Tuning sweep for the factor $f_x$ used in the sieve bound. Points mark the fastest tested value in each decade, and the band gives values within $2.5\%$ of the fastest runtime.}
   \label{fig:paramtune}
   \end{figure}
   \FloatBarrier

   Finally, Figure \ref{fig:scaling} gives decade-to-decade timing ratios.
   The dashed line is the pure $x^{2/3}$ baseline, $10^{2/3}\approx 4.64$.
   Departures reflect overheads, parameter changes, memory effects, and the onset of the bucket scheduler visible in the sieve panel at $10^{20}$.
   The sieve ratios fall after this transition as the tuned values of $f_x$ decrease through $10^{22}$, then move back up once $f_x$ is held fixed.

   \begin{figure}[ht]
   \centering
   \includegraphics[width=1\textwidth]{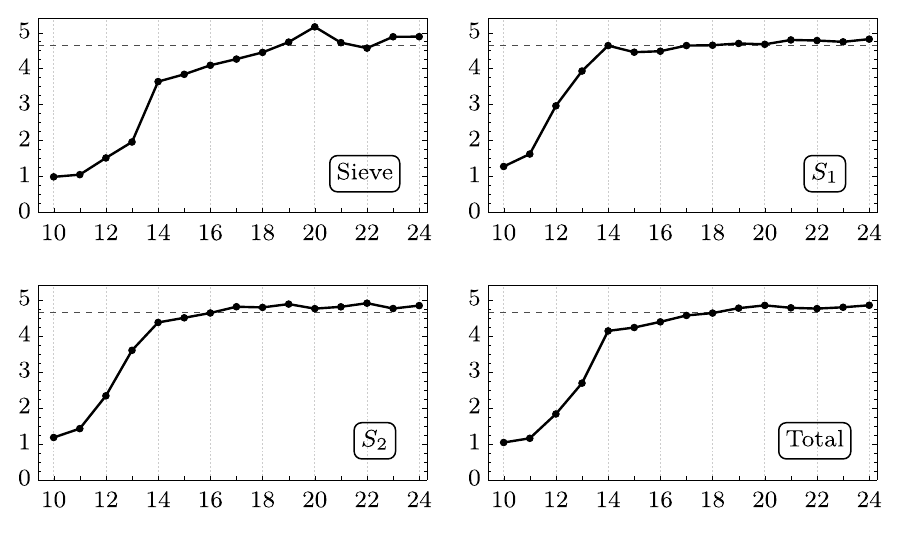}
   \captionsetup{skip=2pt}
   \caption{Ratio of runtimes at successive powers of $10$. In each panel, the horizontal axis is the decade $k$, and the vertical axis is the ratio $T(10^k)/T(10^{k-1})$. The dashed horizontal line is $10^{2/3}\approx 4.64$, the ratio predicted by pure $x^{2/3}$ scaling.}
   \label{fig:scaling}
   \end{figure}
   \FloatBarrier
   
   \subsection{Standalone Sieve}

   The sieve is also a standalone tool, so it can be timed independently of the isolated-value computation.
   The standalone run considered here follows \cite{Hur18} by computing $\mu(n)$ and $M(n)$ for all $n\leq 10^{16}$, without any isolated-value updates.
   In \cite{Hur18}, this took about $7.5$ months, with the first segment of length $10^{14}$ taking about one day and the final such segment taking about $2.8$ days.
   On the record machine, the sieve described above computes through $10^{16}$ in $7.39$ days, with the first segment of length $10^{14}$ taking $1.65$ hours and the final segment ending at $10^{16}$ taking $1.84$ hours.
   Thus the later segments are only moderately slower than the first, rather than becoming several times slower.

   The above comparison with \cite{Hur18} mixes hardware and implementation changes.
   To separate these effects, the earlier sieve was also run on the record machine.
   Table \ref{table:sievescaling} compares the three segments of length $10^{14}$ singled out in \cite{Hur18}.
   In the following tables, endpoints are scaled by $10^{14}$, e.g., $[99,100]$ denotes $[9.9\cdot 10^{15},10^{16}]$.

   \begin{table}[ht]
   \setlength{\tabcolsep}{4pt}
   \begin{center}
   \begin{tabular}{| c || c | c | c |}
   \hline
   segment & sieve \cite{Hur18} & this work & speedup\\
   \hline
   $\,\hphantom{9}[0,1]\hphantom{00}$ & $\hphantom{0}4.12$ h & $1.48$ h & $\,2.79 \smalltimes$ \\
   $\,\hphantom{9}[1,2]\hphantom{00}$ & $\hphantom{0}4.83$ h & $1.57$ h & $\,3.08 \smalltimes$ \\
   $\,[99,100]$ & $11.44$ h & $1.84$ h & $\,6.23 \smalltimes$ \\
   \hline
   \end{tabular}
   \end{center}
   \caption{Same-machine comparison with the segmented sieve from \cite{Hur18} on selected standalone sieve segments. All timings are on the record machine.}
   \label{table:sievescaling}
   \end{table}

   Over large sieve ranges, the bucket scheduler is the main source of speedup.
   To measure its effect directly, the same segments were run on the record machine with this feature both disabled and enabled.
   
   \vspace{-5 pt}

   \begin{table}[ht]
   \setlength{\tabcolsep}{4pt}
   \begin{center}
   \begin{tabular}{| c || c | c | c |}
   \hline
   segment & no buckets & buckets & speedup \\
   \hline
   $\,\hphantom{9}[0,1]\hphantom{00}$ & $2.30$ h & $1.48$ h & $\,1.56 \smalltimes$ \\
   $\,\hphantom{9}[1,2]\hphantom{00}$ & $2.98$ h & $1.57$ h & $\,1.90 \smalltimes$ \\
   $\,[99,100]$ & $8.16$ h & $1.84$ h & $\,4.44 \smalltimes$ \\
   \hline
   \end{tabular}
   \end{center}
   \caption{Effect of the large-prime bucket scheduler on the same standalone sieve segments. All timings are on the record machine.}
   \label{table:sievebuckets}
   \end{table}
   
   \vspace{-20 pt}

   Tables \ref{table:sievescaling} and \ref{table:sievebuckets} separate two sources of improvement.
   Even with the bucket scheduler disabled, the present sieve is faster than the earlier same-machine implementation on all three segments.
   Thus, though the bucket scheduler gives the largest performance increase on segments with large endpoints, the other sieve and implementation changes from Sections \ref{sieve} and \ref{implementation} are substantial.

   Table \ref{table:sieveprofile} gives cumulative timings for the standalone sieve at powers of $10$.
   Each ratio compares the runtime with that at the preceding limit.
   The larger ratios near $10^{13}$ and $10^{14}$ occur where the large-prime bucket scheduler begins to contribute substantially.
   After this transition, the ratios start to settle back toward the near-linear behavior expected for segmented sieving, with the decade ratios approaching a value just above 10.
   
   \vspace{-5 pt}

   \begin{table}[ht]
   \centering
   \begin{adjustbox}{max width=\textwidth}
   \begin{tabular}{ | c || r | c || r | c || c | }
   \hline
   \multirow{2}{*}{limit} & \multicolumn{2}{c||}{$\mu$ sieve} & \multicolumn{2}{c||}{$M$ sieve} & \multirow{2}{*}{segment}\\
   \cline{2-5}
    &  \multicolumn{1}{c|}{time} & ratio & \multicolumn{1}{c|}{time} & ratio &\\
   \hline
   $10^{9\hphantom{0}}$ & $0.038$ s & --- & $0.067$ s & --- & $4 \cdot 10^{7\hphantom{0}}$\\
   $10^{10}$ & $0.25$ s & $\hphantom{0}6.74$ & $0.33$ s & $\hphantom{0}4.93$  & $4 \cdot 10^{8\hphantom{0}}$\\
   $10^{11}$ & $2.66$ s & $10.47$ & $3.17$ s & $\hphantom{0}9.61$  & $4 \cdot 10^{9\hphantom{0}}$\\
   $10^{12}$ & $31.14$ s & $11.71$ & $35.31$ s & $11.14$  & $4 \cdot 10^{10}$\\
   $10^{13}$ & $433.7$ s & $13.93$ & $469.3$ s & $13.29$  & $4 \cdot 10^{11}$\\
   $10^{14}$ & $1.38$ h & $11.45$ & $1.48$ h & $11.35$ & $4 \cdot 10^{11}$\\
   $10^{15}$ & $15.34$ h & $11.12$ & $16.27$ h & $10.99$ & $4 \cdot 10^{11}$\\
   $10^{16}$ & $7.00$ d & $10.94$ & $7.39$ d & $10.90$ & $4 \cdot 10^{11}$\\
   \hline
   \end{tabular}
   \end{adjustbox}
   \captionsetup{skip=2pt}
   \caption{Timings for the standalone sieve on the record machine through the listed endpoint.
   The segment column gives the sieve segment length used for that endpoint.}
   \label{table:sieveprofile}
   \end{table}
   
      \vspace{-20 pt}

   \subsection{An Additional Isolated Value}

   As one further benchmark, the implementation was also run at Avogadro's number, a natural large input near the scale of the largest computations.
   In the usual notation, $N_A = 6.02214076\times 10^{23}$.
   Since the 2019 redefinition of the SI base units, this value is exact, namely
   $602\,214\,076\cdot 10^{15}$.
   This also exercises the implementation at a large input that is not a power of $10$.
   On the record machine, the computation took $4.9$ days, giving
   $$ M(N_A) = 104\,076\,878\,054. $$
   
   \vspace{20 pt}

   \section{Validation}
   \label{validation}

   The scale and complexity of the implementation make validation especially important, and so the checks below are meant to be thorough and complementary.
   Some test the segmented sieve, some test the isolated-value summation, and some test only coarse features such as parity or size.
   Taken together, they give evidence against several failure modes, without amounting to full independent reruns.

   \subsection{Checks Against Previous Data}

   The segmented Mertens sieve was tested against the data computed in \cite{Hur18}.
   That computation stored values of $M(x)$ for all multiples of $10^8$ up to $10^{16}$, as well as extrema and zeros.
   This data provides a large set of test points for both the standalone sieve and the isolated-value routine.

   The isolated-value code was run on known powers of $10$ and powers of $2$.
   In particular, it reproduces the values reported in \cite{DR}, \cite{EK}, \cite{Hur18}, and \cite{HT} in the range where they are available.
   The agreement at $10^{23}$ is a particularly strong check, since \cite{HT} computed this value independently using a different algorithm.
   In fact, the optimized Helfgott-Thompson implementation of Section \ref{htcompare} provides an independent check for smaller inputs where both programs can be run in reasonable time.

   \subsection{M\"obius Sieve Identity}

   The segmented M\"obius sieve was tested against the identity
   $$ \sum_{n\leq x}\mu(n)\left\lfloor \frac{x}{n}\right\rfloor = 1. $$
   Here, every value of $\mu(n)$ up to $x$ contributes, making it a natural test.
   It was verified for powers of $10$ from $10^9$ through $10^{16}$ by sieving through $x$ and summing the left-hand side in parallel over the generated segments.
   This is a global consistency check rather than a proof that each $\mu$ is correct.

   \subsection{Verification Runs}

   The new values at $x=10^{24}$ and $x=10^{25}$ were checked by recomputing nearby arguments on the record machine using different tuning parameters.
   For each of these two inputs, an independently chosen random integer $r$ near $10^7$ was used to compute $M(x-r)$.
   The value of $M(x)$ was then recovered by adding the remaining $r$ M\"obius values, computed separately in Mathematica.
   For both values of $x$, these verification runs used $f_x=0.7$ and $c_x=1.6$.
   The results agreed with the values reported here.

   Although the isolated-value computation is rerun at $x-r$, the check is not as independent as it first appears.
   Since $r$ is small relative to $x$, many of the quotients appearing in the computations of $M(x)$ and $M(x-r)$ are identical.
   Nevertheless, the test perturbs the endpoint and the summation boundaries, uses a separately computed terminal interval, and checks that the agreement is not an artifact of evaluating the algorithm only at powers of $10$.

\subsection{Parity Checks}

   The parity of $M(x)$ coincides with that of the square-free counting function $Q(x)=\sum_{n\le x} |\mu(n)|$.
   The well-known identity
   $$ Q(x)=\sum_{d\leq \sqrt{x}}\mu(d)\left\lfloor\frac{x}{d^2}\right\rfloor $$
   then gives an independent way to compute $Q(x)$ in $O(x^{1/2+\varepsilon})$ time.

   This check was run for powers of $10$ through $10^{25}$ using a separate plain segmented M\"obius sieve, without the enhancements described in Section \ref{sieve}.
   The formula above was summed in parallel modulo $2$.
   In each tested decade, the parity of $Q(10^n)$ agreed with that of the reported $M(10^n)$ values.

   \subsection{Analytic Sanity Checks}

   The values at $10^{24}$ and $10^{25}$ were compared with estimates from the truncated explicit formula, as in \cite{Hur18}.
   Let
   $$ E_N(x)=2\sqrt{x}\sum_{i=1}^N
      \frac{\cos(\gamma_i\log x+\psi_i)}
           {|\rho_i\zeta'(\rho_i)|} $$
   with $N=5\,000\,000$, where $\rho_i$ runs over the nontrivial zeros of $\zeta$ with positive imaginary part,
   $\gamma_i=\operatorname{Im}(\rho_i)$, and
   $\psi_i=-\arg(\rho_i\zeta'(\rho_i))$.
   This estimate tends to give the sign, order of magnitude, and a rough indication of the leading digits, except when $M(x)$ happens to be small.
   To handle such cases, the relevant diagnostic was the normalized residual
   $$ \frac{M(x)-E_N(x)}{\sqrt{x}}. $$
   The following table gives these residuals for the largest powers of $10$ considered here.
   The values at $10^{24}$ and $10^{25}$ are comparable in size to those observed at the previously known inputs.

   \begin{center}
   \begin{tabular}{ | c | r || c | r | }
   \hline
   $x$ & \multicolumn{1}{c||}{residual} & $x$ & \multicolumn{1}{c|}{residual}\\
   \hline
   $10^{16}$ & $-0.000293075$ & $10^{21}$ & $-0.000014991$\\
   $10^{17}$ & $\phantom{-}0.000266250$ & $10^{22}$ & $-0.000289840$\\
   $10^{18}$ & $\phantom{-}0.000420964$ & $10^{23}$ & $-0.000254179$\\
   $10^{19}$ & $-0.000121069$ & $10^{24}$ & $\phantom{-}0.000387819$\\
   $10^{20}$ & $-0.000054745$ & $10^{25}$ & $-0.000281637$\\
   \hline
   \end{tabular}
   \end{center}

   The estimates $E_N(x)$ were also examined as the number of zeros increased to $5\,000\,000$.
   Their behavior was consistent with stabilization toward the reported estimates, rather than an isolated agreement at the final truncation.
   This is not a proof of correctness, but it gives an independent check from different code and mathematics.

   \subsection{Internal Checks}

   The preceding checks compare the output with separate computations.
   A different class of failure is internal, such as a compressed residual overflowing, an accumulator exceeding its intended range, or a specialized arithmetic path being used outside the range for which it was checked.

   The residual array in the compressed Mertens representation was checked for overflow through the value of $u$ used at $10^{25}$.
   This is well below the range where the Ng heuristic discussed in Section \ref{implementation} predicts overflow.

   Crude absolute-value estimates for the running accumulations in $S_1$ and $S_2$, and for the stored combinations formed from these sums, remain below $2^{63}$ throughout the ranges handled by the 64-bit accumulation paths.
   In ranges where both 64-bit and 128-bit quotient paths apply, the two paths were compared on various inputs.
   Additional implementation bounds, including those that would need to be revisited for larger inputs, are documented with the released code.

   \subsection{Recovered Intermediate Values}
   \label{recoveredvalues}

   The recovery pass described in Section \ref{recoverypass} produces a large family of intermediate values that can be checked independently.
   This is the same kind of check used in earlier implementations of the classical method, but the inclusion-exclusion reductions change which values remain recoverable.
   If $N=\lfloor x/u\rfloor$, then the Mertens value recovery pass recovers $M(x/k)$ for any odd square-free $k\leq N$ satisfying
   $$ P^-(k)k>N, $$
   where $P^-(k)$ is the least prime factor of $k$ and $P^-(1)=\infty$.
   Appendix \ref{recoveredappendix} proves this criterion and derives the corresponding density of approximately $0.351537$.

   For the computation at $10^{25}$, the sieve bound gives $N=730\,213\,603$.
   In addition to the main value $M(x)$ coming from $k=1$, the recovery pass gives $256\,697\,925$ intermediate values.
   Their arguments $x/k$ range from approximately $1.37\cdot 10^{16}$ to $3.70\cdot 10^{20}$.
   Thus the surviving family does not contain the largest arguments of the form $M(x/k)$, apart from $M(x)$ itself.
   A stratified random sample of $900$ of these values was recomputed using a separate, more direct implementation.
   The sample contained $400$ arguments between $10^{16}$ and $10^{17}$, $200$ between $10^{17}$ and $10^{18}$, $150$ between $10^{18}$ and $10^{19}$, $100$ between $10^{19}$ and $10^{20}$, and $50$ above $10^{20}$.
   These direct checks took approximately $9$ days in total, and all values agreed.

   \subsection{Direct Checks on the Largest Evaluations}

   The preceding checks leave the largest evaluations of $S(y,u)$ least directly tested.
   These occur for small outer indices $k$, where $y=x/k$ lies above the range reached by the recovered intermediate values.
   A single evaluation of $S(y,u)$ costs only $O(\sqrt{y})$, so these values can be checked directly even when a full recomputation of $M(x)$ is not practical.

   For $S_2$, the optimized computation was compared with a direct evaluation from the definition.
   This check was run on values from the top 128-bit range, from an intermediate range, and from a range where the quotients are 64-bit.
   The comparison follows the two cases from Section \ref{ie}, namely $S_2(x/k)-S_2(x/(2k))$ in the combined range and $S_2(x/k)$ in the single range.
   The direct evaluation recomputed these quantities from the defining sum
   $$ S_2(y)=\sum_{n\leq \nu_y}\left\lfloor \frac{y}{n}\right\rfloor\mu(n) $$
   using a separate plain segmented M\"obius sieve, ordinary 128-bit division, and 128-bit accumulators.
   The reference computation does not use the quotient predictor, the optimizations described in Section \ref{sieve}, the coprime-to-$6$ reduction, or the tables of piecewise summands in Section \ref{ie}.
   Thus agreement gives a direct check of the tested $S_2$ values, including the reductions, merged breakpoints, quotient arithmetic, and accumulation.
   All tested values agreed.

   For $S_1$, an analogous recomputation would require an independent table of Mertens values up to $u$.
   Instead, the test replaces the Mertens lookup by a deterministic surrogate
   $$ M_{\mathrm{proxy}}(t)=A(t)\sqrt{t}\cos(\gamma_1\log t),
      \qquad
      A(t)=0.5h(\lfloor 100\log_{10}t\rfloor), $$
   where $h\colon\mathbb{Z}\to[0,1)$ is a hash function and $\gamma_1$ is the ordinate of the first nontrivial zero of $\zeta$.
   The surrogate is not meant to approximate $M(t)$ closely.
   Its role is to give the $S_1$ loop deterministic lookup values that vary with the queried argument and have roughly the same size as the true Mertens values.
   The hashed amplitude adds mild irregularity, while the cosine term gives the values a log-scale oscillation suggested by the explicit formula.
   The amplitude bound $0.5$ was chosen from the size of $M(t)/\sqrt{t}$ observed in \cite{Hur18} for $t\leq 10^{16}$.

   The same input ranges were then used for the $S_1$ check.
   The optimized $S_1$ routine was run with its Mertens lookup replaced by $M_{\mathrm{proxy}}$.
   A reference implementation evaluated the same reduced sum (\ref{s1parity}) by direct division, recomputing the split points by exact integer square root.
   The two implementations agreed for every tested value.
   This does not independently verify the true $S_1$ values.
   It checks the quotient streams, parity-$2$ indexing, split points, and accumulation at the scale of the real computation.
   The Mertens inputs and their compressed storage are covered by the preceding checks.

   \subsection{Scope of the Validation}

   The checks above do not test every part of the computation with the same degree of independence.
   The sieve identities and comparisons with previous data test the generated Mertens and M\"obius values over long ranges.
   The parity check reaches the final values through $10^{25}$, but only modulo $2$.
   The analytic check tests the size and leading behavior, but not the lower digits.
   The nearby-input runs perturb the endpoint and parameters, although they still overlap substantially with the original computation.
   The recovered intermediate values test many smaller arguments against a separate implementation.
   The direct checks on the largest evaluations narrow the remaining gap by testing the largest $S_2$ values on real data and the corresponding $S_1$ summation machinery with a deterministic surrogate.

   The least directly tested piece is therefore the full $S_1$ value for the largest evaluations of $S(y,u)$.
   Its Mertens inputs and summation machinery are checked separately, but an independent recomputation of the summed values would require a separate Mertens table up to $u$.
   The loops and bookkeeping that combine the many $S(x/k,u)$ contributions are exercised end to end, but there is no separate implementation of this step at the scale of the largest computations.
   A fully independent computation of $M(10^{24})$ or $M(10^{25})$ by unrelated code or hardware would still be the strongest check.

   The evidence is therefore cumulative rather than resting on one check.
   The tests above target different failure modes, from sieve identities and range checks to endpoint perturbations, recovered values, parity checks, analytic estimates, and direct checks on the largest evaluations.
   
   \section{Practical Comparison with Helfgott-Thompson}
   \label{htcompare}

   The Helfgott-Thompson algorithm is an asymptotically superior alternative to the classical isolated-value approach.
   Its expected word operation count is
   $$ O\left(x^{3/5}(\log x)^{3/5}(\log\log x)^{2/5}\right), $$
   up to lower-order terms.
   The authors used it to compute powers of $10$ through $10^{23}$ \cite{HT}, making it not only a theoretical improvement but a demonstrated practical method.
   It is therefore natural to examine it alongside the classical implementation developed above, especially when the aim is to understand which method to prefer for isolated values of $M(x)$ as $x$ and the hardware vary.
   The newer Hirsch-Kessler-Mendlovic framework has a still better exponent, but its practical behavior remains open.

   The comparison below uses a separate optimized Helfgott-Thompson implementation based on the authors' code.
   Its mathematical decomposition is unchanged.
   The implementation changes reduce repeated divisions, improve the large-non-free factorization sieve, expose more parallel work in the large-free phase, reuse scratch buffers, and simplify correction terms in the large-free double-sum routine.
   The most important local simplifications occur in \textsc{SumByLin}, where many correction intervals are empty or full, or cancel exactly as $\operatorname{sumInter}(J)-\operatorname{sumInter}(J)$.
   Appendix \ref{htappendix} gives the implementation details, with further notes included in the code release described in Section \ref{codeavailability}.

   The Helfgott-Thompson implementation also has tunable constants.
   In the notation of \cite{HT}, the split parameter $v$ separates the large-free and large-non-free parts, while $C$ and $D$ are constants in the large-free-variable routine.
   The original column in Table \ref{table:htspeedup} uses the default settings in the authors' code, namely $v=\lfloor x^{2/5}\rfloor/3$, $C=10$, and $D=8$.
   The optimized column uses $v=\lfloor x^{2/5}\rfloor/2$, $C=42$, and $D=16$, chosen from timing sweeps on the record machine.
   The denominator in $v$ is the most sensitive constant, but at higher decades the timing curve was fairly flat around $2$, so later runs kept the same choice without their own sweep.

   \begin{table}[ht]
   \centering
   \setlength{\tabcolsep}{5pt}
   \begin{tabular}{ | c || c | c | c | }
   \hline
   $x$ & orig. HT & opt. HT & speedup\\
   \hline
   $10^{16}$ & $43.74$ s & $12.74$ s & $3.4 \smalltimes$\\
   $10^{17}$ & $242.9$ s & $59.68$ s & $4.1 \smalltimes$\\
   $10^{18}$ & $\hphantom{.}1159$ s & $276.7$ s & $4.2 \smalltimes$\\
   $10^{19}$ & $\hphantom{0}1.55$ h & $\hphantom{.}1240$ s & $4.5 \smalltimes$\\
   $10^{20}$ & $\hphantom{0}6.90$ h & $\hphantom{0}1.54$ h & $4.5 \smalltimes$\\
   $10^{21}$ & $\hphantom{0}30.4$ h & $\hphantom{0}7.05$ h & $4.3 \smalltimes$\\
   $10^{22}$ & $\hphantom{0}5.63$ d & $32.82$ h & $4.1 \smalltimes$\\
   \hline
   \end{tabular}
   \captionsetup{skip=2pt}
   \caption{Speedup of the optimized Helfgott-Thompson implementation over the authors' implementation on the record machine.}
   \label{table:htspeedup}
   \end{table}

   In the rest of this section, the implementation of Section \ref{results} is called the classical implementation, and the optimized Helfgott-Thompson code is abbreviated as optimized HT.
   Table \ref{table:htselection} compares the two implementations on the record machine.
   The $10^{24}$ and $10^{25}$ runtimes in the opt.\ HT column are projections.

   \begin{table}[!ht]
   \centering
   \setlength{\tabcolsep}{4pt}
   \begin{tabular}{ | c || c | c | c | }
   \hline
   $x$ & opt. HT & classical & ratio\\
   \hline
   $10^{16}$ & $12.74$ s & $\hphantom{0}2.39$ s & $5.3$\\
   $10^{17}$ & $59.68$ s & $10.82$ s & $5.5$\\
   $10^{18}$ & $276.7$ s & $\hphantom{0}49.9$ s & $5.5$\\
   $10^{19}$ & $\hphantom{.}1240$ s & $237.6$ s & $5.2$\\
   $10^{20}$ & $\hphantom{0}1.54$ h & $\hphantom{.}1150$ s & $4.8$\\
   $10^{21}$ & $\hphantom{0}7.05$ h & $\hphantom{0}1.52$ h & $4.6$\\
   $10^{22}$ & $32.82$ h & $\hphantom{0}7.26$ h & $4.5$\\
   $10^{23}$ & $\hphantom{0}6.21$ d & $\hphantom{0}1.45$ d & $4.3$\\
   $10^{24}$ & ${{}^{\phantom{*}}}\hphantom{00.}28$ d$^*$ & $\hphantom{0}7.01$ d & $4.0$\\
   $10^{25}$ & ${{}^{\phantom{*}}}\hphantom{0.}127$ d$^*$  & $\hphantom{0}34.6$ d & $3.7$\\
   \hline
   \end{tabular}
   \captionsetup{skip=2pt}
   \caption{Comparison of optimized HT with the classical implementation on the record machine. Asterisks mark projected runtimes.}
   \label{table:htselection}
   \end{table}

   \FloatBarrier

   Optimized HT avoids overflow restriction noted in \cite{HT} at $10^{23}$ by reducing the relevant \textsc{SumByLin} residue before the final multiplication by $n_\circ$.
   For larger inputs, guarded $256$-bit fallbacks are in place and only used for the necessary products and divisions in \textsc{SumByLin}.
   Additionally, the large-free schedule caps its working range to keep memory use feasible, which throttles the available parallel work near the highest tested decades.
   These changes should make inputs beyond $10^{23}$ attainable without further modification, though no full runs were executed at this scale.

   The optimized HT decade ratios move in the expected direction, but on the record machine they remain far from the leading $10^{3/5}$ ratio.
   Profiling shows that over the measured decades, an increasing share of the runtime falls in the large-free double-sum routine, whose \textsc{SumByLin} correction path is particularly latency-sensitive on the record machine.
   The correction-interval simplifications reduce the constant, but they do not remove the growing correction width.
   On other machines, such as the x86 machine of Appendix \ref{gpuvariant}, the decade ratios can settle faster toward $10^{3/5}$, as seen in the CPU columns of Table \ref{table:gpuht}.

   As shown in Table \ref{table:htselection}, on the record machine, the classical implementation is the more efficient choice at every input.
   The gap narrows over decades as the asymptotics suggest, but it is still about a factor of $3.7$ at the projected $10^{25}$ runtimes.
   This should be interpreted as a machine-specific result since the two implementations stress different parts of the hardware.
   The classical implementation relies more heavily on the bucket scheduler in long sieve passes and large contiguous memory traffic, while optimized HT shifts more work into latency-sensitive arithmetic in the large-free double sums.

   Memory also affects the choice between methods.
   The classical implementation stores several values for each square-free outer index, giving a floor of roughly $38$ GB at $10^{25}$ that segment tuning does not reduce.
   This floor is the $O(x^{1/3}(\log\log x)^{2/3})$ space term of \cite{DR}, while the sieve segments themselves are elastic, as the $32$ GB laptop column of Table \ref{table:isolatedtimings} shows through $10^{23}$.
   The Helfgott-Thompson algorithm has a smaller space complexity of $O(x^{3/10}(\log x)^{13/10})$, and a single factorization window of length about $\sqrt{x/v}$ needs only $1.4$ GB at $10^{25}$.
   The optimized implementation instead spends memory on parallel width, running many such windows at once, with the associated arrays reaching $182$ GB at $10^{22}$ and capped at $400$ GB in these runs.
   Thus the asymptotic space advantage belongs to Helfgott-Thompson, while both tuned implementations fill large machines.

   The broader lesson is that the classical and Helfgott-Thompson algorithms are both practical and complementary tools.
   They are asymptotically distinct, independently coded, and fast enough to give serious cross-checks on large values.
   Together they give a genuine algorithmic choice as $x$ and the hardware vary.
   
   \section{Code Availability}
   \label{codeavailability}

   The code is available at \url{https://github.com/greg-hurst/mertens}.
   It contains the standalone segmented M\"obius and Mertens sieve, the classical isolated-value code, the optimized Helfgott-Thompson code, and the experimental GPU variants of Appendix \ref{gpuvariant}.
   It also includes build instructions, command-line examples, and implementation notes.
   This release is intended to make the main implementations reusable and inspectable, rather than to archive every one-off validation or run-management script used during the computations.

   \section{Extensions and Concluding Remarks}
   \label{conclusion}

   This paper has described a segmented M\"obius and Mertens sieve used as a standalone tool, as the basis for the classical $O(x^{2/3+\varepsilon})$ isolated-value implementation, and as part of the optimized Helfgott-Thompson implementation.
   The largest isolated values reported are $M(10^{24}) = 7\,189\,337\,839$ and $M(10^{25}) = -258\,560\,632\,948$, supported by the validation evidence of Section \ref{validation}.

   \subsection{Range Extensions and Implementation Limits}

   The classical isolated-value code should already be able to compute $M(10^{26})$ without modification.
   The optimized Helfgott-Thompson code is also expected to reach that input with the $256$-bit fallbacks mentioned in Section \ref{htcompare}, although this is less tested since it has only been run up to $10^{23}$.
   The code release also contains a more detailed range analysis in \texttt{MertensHurst/INPUT\_BOUNDS.md}, including which implementation constants would need to change for still larger inputs.

   One limitation of the present parallel bucket scheduler is that bucket state is carried forward only through the sub-segments belonging to one large sieve segment.
   At the next large segment, the scheduler is rebuilt rather than continued from a global state shared by all threads.
   This was not costly here, because the chosen segment sizes made the number of rebuilds small relative to the sieve range, but it could matter for much larger sieve ranges or on machines that force substantially smaller segments.
   A future version could carry some bucket state across large segments, provided this can be done without adding too much synchronization between threads.

   \subsection{Alternatives Considered}

   Several natural alternatives were considered but not used.
   They remain possible future directions, but none fit the practical balance of the present implementations.

   Helfgott's low-space sieve of Eratosthenes \cite{HelSieve} is a natural candidate here, since it addresses the same large-divisor issue that motivates bucket scheduling.
   When the sieving segment is short relative to the large primes, most of those primes do not contribute.
   Its local construction could therefore be useful on machines with much smaller feasible segments, or in an implementation that keeps explicit factorization data.
   In the present implementation, however, the byte-logarithm encoding, bucket scheduler, and parallel layout are tightly integrated.
   Using Helfgott's sieve in their place would therefore be a substantially different low-level design rather than a drop-in replacement for the bucket scheduler.

   Another possibility for computing $\mu(n)$ is to stop the ordinary prime sieve below $\sqrt{n}$.
   Sieving by primes up to $n^{1/3}$ leaves a cofactor with at most two prime factors, and avoids most of the large-prime work that the bucket scheduler was introduced to remove.
   The remaining large-prime information is then concentrated in this cofactor.
   In a sieve that stores explicit cofactors this is natural, since one can test whether the cofactor is prime and update $\mu(n)$ accordingly.
   The compact byte-logarithm state used here does not retain that cofactor.
   Thus this approach would either enlarge the sieve state substantially, or give back much of the large-prime work it was meant to remove.

   A demand-driven Mertens table was tested. While computing the prefix sum on a segment, one can store only those $M$-values later queried by $S_1$.
   This gives more cache-friendly storage, but makes each lookup less direct than $M(q)=C[q/H]+R[q]$ inside the hottest noncontiguous access loop.
   On the machines tested here, this was not a favorable tradeoff, though the balance may differ on hardware with a different ratio of memory latency to indexing cost.

   One can also sieve only up to some $u^*<u$ and recover the missing larger Mertens values recursively.
   This loses the shared segment structure that makes the direct computation efficient.
   The $S_1$ ranges require many large values, and recursive isolated-value calls repeat setup work that the full segmented sieve performs once.

   Finally, GPU variants of both isolated-value implementations were tested.
   On the machines here, the GPU is not faster than the optimized CPU code at the offloaded arithmetic itself, so the offload pays only while its work is hidden behind independent CPU work.
   This gives gains at small inputs, most notably on the x86 machine, but does not improve the record-scale path.
   The details are given in Appendix \ref{gpuvariant}.

   \subsection{Outlook}

   The computations here show that exponent improvements are only one part of the practical problem.
   The classical isolated-value method still has considerable range when its summation identities, sieving, storage layout, and parallel scheduling are designed together.
   A natural next benchmark would be an optimized implementation of the Hirsch-Kessler-Mendlovic framework, with its $O(x^{8/15+\varepsilon})$ time and $O(x^{1/3+\varepsilon})$ space tradeoff.
   The eventual choice among this framework, Helfgott-Thompson, and future implementations depends on the input range, the implementation constants, and the machine's time and memory costs in the dominant subroutines.

   \section*{Acknowledgments}

   I am grateful to my wife, Aman, for her patience, encouragement, and support throughout this work.
   Her background in chemistry provided the motivation for the Avogadro's number computation, and our own chemistry made it especially fitting.
   I also thank Harald Andr\'es Helfgott and Lola Thompson for their work on the algorithm in \cite{HT}, for making their implementation available, and for helpful correspondence.
   
   \appendix

   \section{Quotient-Predictor Correction Frequencies}
   \label{qpredappendix}

   This appendix records the heuristic supporting the asymptotic correction claims in Section \ref{quotientcomp}.
   Only the step-$1$ predictor is treated here.
   The step-$2$ and alternating step-$2/4$ predictors have different correction sets, but the same verification step makes their use exact.
   Their correction frequencies are expected to have the same qualitative behavior.

   Let $q_n=\lfloor x/n\rfloor$, and let
   $$ c_n=q_n-2q_{n-1}+q_{n-2} $$
   be the correction to the linear prediction of $q_n$ from the two previous quotients.
   The corrections are counted on
   $$ I_x=\{n:\sqrt[3]{2x} < n \leq x^{2/3}\},\qquad N_x=|I_x|. $$
   Here $\{t\}$ denotes the fractional part, and $\|t\|=\min(\{t\},1-\{t\})$ denotes the distance from $t$ to the nearest integer.

   \begin{proposition}
   Let $C_j(x)=|\{n\in I_x:c_n=j\}|$.
   The fractional parts occurring below are modeled as locally uniform.
   Then
   $$ C_{\pm1}(x)\approx Ax^{1/2}\pm Dx^{1/3},\qquad
      C_2(x)\approx Bx^{1/3},\qquad
      C_0(x)\approx N_x-2Ax^{1/2}-Bx^{1/3}, $$
   where
   $$ A=\frac{4-\sqrt2}{2\pi}\zeta\!\left(\frac32\right),\qquad
      B=\frac{1}{5\cdot2^{5/3}},\qquad
      D=\frac{2^{1/3}}{5}. $$
   In particular, since $N_x\sim x^{2/3}$, corrections of size $1$ have frequency on the order of $x^{-1/6}$ and corrections of size $2$ have frequency on the order of $x^{-1/3}$.
   \end{proposition}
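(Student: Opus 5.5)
The plan is to reduce $c_n$ to a function of one fractional part and then average that function over $n\in I_x$ under the local-uniformity model.

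\textbf{Exact reduction.} Put $s_n = x/(n-2)-x/(n-1)$ and $\delta_n = x/(n-2)-2x/(n-1)+x/n = 2x/(n(n-1)(n-2))$. The lower endpoint of $I_x$ is chosen so that $0<\delta_n<1$, with $\delta_n\approx 2x/n^3$. Write $\theta_n=\{x/(n-1)\}$ and $\alpha_n=\{s_n\}$. Then
$$ q_{n-2}-q_{n-1}=\lfloor \theta_n+s_n\rfloor,\qquad q_{n-1}-q_n=-\lfloor \theta_n-s_n+\delta_n\rfloor, $$
so $c_n=\lfloor\theta_n+s_n\rfloor+\lfloor\theta_n-s_n+\delta_n\rfloor$. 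The integer parts of $s_n$ cancel, and
$$ c_n=\lfloor\theta_n+\alpha_n\rfloor+\lfloor\theta_n-\alpha_n+\delta_n\rfloor , $$
which takes values in $\{-1,0,1,2\}$. This is consistent with the correction set quoted in Section \ref{quotientcomp}.

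\textbf{Conditional probabilities.} Next I would model $\theta_n$ as uniform on $[0,1)$, independently of the slowly varying pair $(\alpha_n,\delta_n)$. For fixed $(\alpha,\delta)$ I would compute the four probabilities $p_j(\alpha,\delta)$ by elementary interval lengths. When $\delta\le\alpha\le 1/2$, we get $c=1$ on $\theta\ge 1-\alpha$, with probability $\alpha$, and $c=-1$ on $\theta<\alpha-\delta$, with probability $\alpha-\delta$. The case $\alpha\ge1/2$ is the mirror image with $1-\alpha$ in place of $\alpha$. The value $c=2$ can occur only when $\alpha$ lies within $\delta$ of an integer, that is, when $s_n$ itself is within $\delta_n$ of an integer, and then it has probability of order $\delta-\alpha$. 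Summing over $n$ gives
$$ C_{\pm1}\approx\sum_{n\in I_x}\Bigl(\|s_n\|\mp\tfrac12\delta_n+\cdots\Bigr), \qquad C_2\approx\sum_{n:\ \|s_n\|<\delta_n}(\delta_n-\|s_n\|), $$
and $C_0$ follows from $\sum_j C_j=N_x$.

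\textbf{Evaluating the sums.} For the $x^{1/2}$ term I would use $s_n\approx x/n^2$.
\begin{itemize}
\item For $n>\sqrt x$ we have $s_n<1$ and $\|s_n\|=\min(s_n,1-s_n)$, essentially $x/n^2$ for $n>\sqrt{2x}$. This sums to a constant times $\sqrt x$.
\item For $n\le\sqrt x$, I would split into blocks where $m\le x/n^2<m+1$, i.e.\ $\sqrt{x/(m+1)}<n\le\sqrt{x/m}$. On each block, change variables $t=x/n^2$, so $dn=-\tfrac12\sqrt x\,t^{-3/2}dt$. This gives $\tfrac12\sqrt x\int_0^1\|\,\cdot\,\|\,(m+\tau)^{-3/2}d\tau$ per block.
\item Summing over $m$ produces a $\zeta(3/2)$-type constant. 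I expect it to collapse, together with the $n>\sqrt x$ piece, to $A=\frac{4-\sqrt2}{2\pi}\zeta(3/2)$. The $\sqrt2$ should come from the breakpoint $x/n^2=1/2$; the factor $\tfrac{1}{2\pi}$ I would not predict in advance but expect to emerge from the block sums, and the hardest step is checking that it does.
\end{itemize}
The $\mp\frac12\delta_n$ asymmetry sums to $\tfrac12\sum_{n>n_0}2x/n^3\approx x/(2n_0^2)$. With $n_0=(2x)^{1/3}$ this is a constant times $x^{1/3}$, matching $D=2^{1/3}/5$ only after including the exact weight of the $\delta$-dependent terms; that bookkeeping must be redone.

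\textbf{Main obstacle.} The hardest part is $B$ and the exact $D$. The region $\|s_n\|<\delta_n$ is thin, so the uniformity model must be applied to $\alpha_n$ near integers rather than to $\theta_n$. I would argue that as $n$ moves across a block, $s_n$ passes each integer with step size about $\delta_n$. This makes $\alpha_n$ locally equidistributed at scale $\delta_n$, so each integer $m$ contributes an $O(\delta_n)$-weighted amount. I would then integrate against the density of $n$ near $n\approx\sqrt{x/m}$, converting $\sum_m$ into an integral in $n$ of a power of $x/n^3$ down to $n_0$. That produces the $x^{1/3}$ scale and constants of the form $2^{-5/3}/5$. Finally I would check the leading constants against Table \ref{table:qpred} for $x=10^{10},10^{11},10^{12}$ as a sanity test.
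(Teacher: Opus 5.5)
Your exact reduction is the paper's: your $s_n,\delta_n$ are its $L_{n-1},\lambda_n$. Your main term $\sum_{n\in I_x}\|s_n\|\approx\frac12\sqrt{x}\int_0^\infty t^{-3/2}\|t\|\,dt$ is also exactly its computation. Your block sums do collapse, to $8(2^{-1/2}-2)\zeta(-\tfrac12)$, and the $1/\pi$ comes from the functional equation $\zeta(-\tfrac12)=-\zeta(\tfrac32)/(4\pi)$.

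The gap is in $B$ and $D$, where the formulas you wrote down give the wrong constants. From $c=\lfloor\theta+\alpha\rfloor+\lfloor\theta-\alpha+\delta\rfloor$, the value $c=2$ requires $\theta\geq\max(1-\alpha,\,1+\alpha-\delta)$. So it occurs only for $0<\alpha<\delta$, that is, for $s_n$ just \emph{above} an integer, not within $\delta_n$ on either side. There it has probability $\min(\alpha,\delta-\alpha)$, a triangle of area $\delta^2/4$. Your weight $\delta_n-\|s_n\|$ over $\|s_n\|<\delta_n$ averages to $\delta_n^2$ under the very local equidistribution you propose, so it would give $4B$. The correct average gives $C_2\approx\sum_n\delta_n^2/4\approx x^2\int_{(2x)^{1/3}}^\infty n^{-6}\,dn=Bx^{1/3}$.

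For $D$, your own conditional probabilities give $P(c=1)-P(c=-1)=\delta$ for every $\alpha\geq\delta$. For example, for $\alpha\geq(1+\delta)/2$ one gets $P(c=1)=1-\alpha+\delta$ and $P(c=-1)=1-\alpha$. Hence $C_1>C_{-1}$, and the sign in your $\|s_n\|\mp\frac12\delta_n$ is reversed, contradicting both the statement and Table~\ref{table:qpred}. Even with the sign fixed, $\frac12\sum\delta_n\approx 2^{-5/3}x^{1/3}\neq Dx^{1/3}$. The discrepancy comes from the region $0<\alpha<\delta$ you excluded, where $P(c=-1)=0$ and $P(c=1)=|2\alpha-\delta|$.

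The missing idea, which is what the paper uses, makes this bookkeeping unnecessary. With $d_n=q_{n-1}-q_n$ one has $c_n=d_{n-1}-d_n$. So $\sum_{n\in I_x}c_n$ telescopes to $d_{a-1}-d_b\approx2^{-2/3}x^{1/3}$, where $a,b$ are the endpoints of $I_x$. The same sum equals $C_1-C_{-1}+2C_2$. In your model this is the exact identity $E_\theta[c_n]=\delta_n$, since $E_\theta\lfloor\theta+t\rfloor=t$. Hence $C_1-C_{-1}\approx(2^{-2/3}-2B)x^{1/3}=2Dx^{1/3}$. So $D$ is forced by $B$, and your factor-of-four error in $B$ would propagate directly into $D$.
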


   For fixed $n$, write $\theta=\{x/(n-1)\}$, $L_n=x/(n(n-1))$, $\alpha=\{L_n\}$, and $\lambda_n=L_{n-1}-L_n$.
   Since $x/n=x/(n-1)-L_n$ and $x/(n-2)=x/(n-1)+L_{n-1}$, it follows that
   $$ c_n=\lfloor\theta-\alpha\rfloor+\lfloor\theta+\alpha+\lambda_n\rfloor,
      \qquad
      \lambda_n=\frac{2x}{n(n-1)(n-2)}. $$
   Away from the lower end of $I_x$, $\lambda_n$ is small.
   Ignoring it first, if $\alpha\leq 1/2$, then $c_n=-1$ when $\theta<\alpha$, $c_n=1$ when $\theta\geq 1-\alpha$, and otherwise $c_n=0$.
   For $\alpha>1/2$, the same conclusion holds with $\alpha$ replaced by $1-\alpha$.
   Treating $\theta$ as uniform, the probabilities of corrections $-1$ and $1$ are therefore both $\min(\alpha,1-\alpha)=\|L_n\|$.
   Thus the common main contribution satisfies
   $$ C_1(x)+C_{-1}(x)\approx 2\sum_{n\in I_x}\left\|\frac{x}{n^2}\right\|
      \approx x^{1/2}\int_0^\infty t^{-3/2}\|t\|\,dt
      =2Ax^{1/2}. $$
   Evaluating the integral shows
   $$ A=\frac12\int_0^\infty t^{-3/2}\|t\|\,dt
      =\frac{4-\sqrt2}{2\pi}\zeta\!\left(\frac32\right). $$

   The correction $2$ can occur only near the lower end of $I_x$, where $\lambda_n$ is not negligible.
   For fixed $\alpha$, the relevant interval for $\theta$ is nonempty only for $\alpha>1-\lambda_n$.
   As $\alpha$ runs over this range, the length of the $\theta$ interval traces a triangle of base $\lambda_n$ and height $\lambda_n/2$.
   Treating $\theta$ and $\alpha$ as uniform, the average probability is therefore $\lambda_n^2/4$, so
   $$ C_2(x)\approx \sum_{n\in I_x}\frac{\lambda_n^2}{4}
      \approx x^2\int_{\sqrt[3]{2x}}^\infty n^{-6}\,dn
      =\frac{x^{1/3}}{5\cdot2^{5/3}}. $$
   The estimates above give the common main size of the corrections $1$ and $-1$, but do not separate them.
   Their smaller bias is obtained from the total correction sum.
   Let $d_n=q_{n-1}-q_n$.
   Then $c_n=d_{n-1}-d_n$, and hence
   $$ \sum_{n\in I_x}c_n=d_{a-1}-d_b\approx 2^{-2/3}x^{1/3}, $$
   where $a=\lceil\sqrt[3]{2x}\,\rceil$ and $b=\lfloor x^{2/3}\rfloor$.
   On the other hand, the same sum is $C_1-C_{-1}+2C_2$.
   Thus the estimate for $C_2$ determines the small bias
   $$ C_1-C_{-1}\approx 2^{-2/3}x^{1/3}-2Bx^{1/3}
      =2Dx^{1/3}. $$
   Together with the common main term for $C_1+C_{-1}$, this gives, at the level of this heuristic,
   $$ C_{\pm1}(x)\approx Ax^{1/2}\pm Dx^{1/3}. $$

   \section{Recovered Intermediate Values}
   \label{recoveredappendix}

   This appendix gives the criterion used in Sections \ref{recoverypass} and \ref{recoveredvalues} to identify and count the intermediate values recovered by the Mertens value recovery pass.

   \begin{proposition}
   \label{recoveredcriterion}
   Let $N=\lfloor x/u\rfloor$, and let $P^-(k)$ denote the least prime factor of $k>1$, with $P^-(1)=\infty$.
   For any odd square-free $k\leq N$ satisfying
   $$ P^-(k)k>N, $$
   the Mertens value recovery pass described in Section \ref{recoverypass} also recovers $M(x/k)$.
   \end{proposition}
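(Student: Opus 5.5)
The plan is to bypass the back-substitution order entirely and exhibit $V_k$, for each $k$ satisfying the hypothesis, as a fixed linear combination of the available right-hand sides $R_j$. Any assignment of values that satisfies the triangular system must then give the true $M(x/k)$.

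\emph{The system.} First I would restate what the system encodes. By (\ref{Ssumidentity}) applied at $y=x/k$, together with the splitting (\ref{formula2split}), the quantity $R_k$ equals $\sum_{q\le N/k,\ q\ \mathrm{odd}} V_{kq}$ for every odd square-free $k\le N$. Here the unknowns $V_j$ range over all odd $j\le N$, square-free or not. Let $\mathcal{V}$ denote any vector $(V_j)$ satisfying all these equations. Back substitution produces one such vector, possibly with arbitrary values placed on unknowns that are not determined individually. The true values of $M(x/j)$ give another.

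\emph{Inversion.} For an odd square-free $k\le N$ with $P^-(k)k>N$, I would prove the identity
$$ V_k=\sum_{\substack{d\le N/k\\ d\ \mathrm{odd}}}\mu(d)\,R_{kd}, $$
valid for every solution $\mathcal{V}$. This requires two steps.

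The first step is that every $R_{kd}$ with $\mu(d)\ne0$ actually occurs in the system. That is, $kd$ must be odd, square-free and at most $N$. Oddness and the bound are immediate. For square-freeness, any prime $p\mid d$ satisfies $p\le d\le N/k<P^-(k)$, so $p\nmid k$. Since $d$ is itself square-free, $kd$ is square-free. This is exactly where the hypothesis $P^-(k)k>N$ enters; for $k=1$ it holds trivially.

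The second step is to substitute the equations and interchange the sums:
$$ \sum_{d}\mu(d)\sum_{r\le N/(kd),\ r\ \mathrm{odd}}V_{kdr}=\sum_{m\le N/k,\ m\ \mathrm{odd}}V_{km}\sum_{d\mid m}\mu(d)=V_k. $$
The divisors of an odd $m$ are odd, so the inner sum over $d\mid m$ is the full divisor sum, which equals $[m=1]$.

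\emph{Conclusion and obstacle.} Since the right-hand side of the identity depends only on the $R$'s, every solution of the system has the same $V_k$, namely the true value $M(x/k)$. In particular the back-substituted value is correct.

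I expect the main point needing care to be making precise what ``recovers'' means when the system has free unknowns at non-square-free indices. I would phrase the conclusion as solution-independence, as above. I would then remark that back substitution, run in decreasing order, outputs a genuine solution of the system. Each equation is used exactly once to define its diagonal unknown $V_k$, and the remaining unknowns are fixed beforehand. So the recovered value coincides with the identity's right-hand side.
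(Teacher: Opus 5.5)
Your proof is correct. It arrives at the same formula as the paper: your identity $V_k=\sum_{d\le N/k,\ d\ \mathrm{odd}}\mu(d)R_{kd}$ is exactly the paper's displayed reduced outer sum, since $R_{kd}$ is the combined difference precisely when $kd\le N/2$, i.e.\ $d\le N/(2k)$. The decisive step is also the same: $d\le N/k<P^-(k)$ forces $(d,k)=1$, so $kd$ is square-free and every needed right-hand side exists.

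The difference is where the identity comes from. The paper applies Theorem~\ref{mainformula} with the parity split (\ref{formula2split}) at the argument $x/k$. This expresses the true value $M(x/k)$ through stored quantities, and the paper then notes that these quantities are present. You instead carry out the M\"obius inversion on the triangular system itself, so your identity holds for \emph{every} solution of the system.

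That buys something the paper's proof leaves implicit. Unknowns $V_{kq}$ with $kq$ not square-free (for instance $V_9$ in the equation for $k=1$) are never determined, so back substitution must fill them in somehow. In general it then produces wrong values at some square-free indices too, such as $V_3$ once $9\le N$. Your solution-independence argument shows these errors cancel in $V_k$ whenever $P^-(k)k>N$. Hence the back-substituted value, and not just a direct evaluation of the M\"obius-weighted sum, equals $M(x/k)$.

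The paper's route is shorter because it reuses an inversion already established. Your route makes the meaning of ``recovers'' precise for the back-substitution procedure as actually described.
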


   \begin{proof}
   For $x/k$, the reduced outer sum used by the implementation is
   \begin{align*}
   M(x/k)
      &=\sum_{\substack{d\leq N/(2k)\\(d,2)=1}}
          \mu(d)\big(S(x/(kd),u)-S(x/(2kd),u)\big)\\
       &+\sum_{\substack{N/(2k)<d\leq N/k\\(d,2)=1}}
          \mu(d)S(x/(kd),u).
   \end{align*}
   Since $d\leq N/k<P^-(k)$, every square-free index $d$ in these sums is coprime to~$k$.
   Also, $k$ is odd and square-free, and the sums include only odd $d$, so the indices $kd$ and $2kd$ appearing above are square-free.
   Thus the corresponding stored values $S(x/(kd),u)$ and $S(x/(2kd),u)$ are present when the back substitution is performed.
   \end{proof}

   The number of recovered intermediate values is asymptotic to $CN$, where
   $$ C=
      \frac{6}{\pi^2}
      \sum_{\substack{p\geq 3\\p\ \mathrm{prime}}}
      \frac{p-1}{p(p+1)}
      \prod_{\substack{q<p\\q\ \mathrm{prime}}}\frac{q}{q+1}
      \approx 0.351537. $$
   To see this, fix an odd prime $p=P^-(k)$.
   The density of square-free integers with least prime factor $p$ is
   $$ \frac{6}{\pi^2}\frac{1}{p+1}
      \prod_{\substack{q<p\\q\ \mathrm{prime}}}\frac{q}{q+1}. $$
   For this fixed $p$, the condition $pk>N$ keeps the part of the range with $k>N/p$, contributing the factor $1-1/p=(p-1)/p$ in the summand above.
   The numerical value of $C$ was estimated by truncating the sum at $p\leq 10^7$ and approximating the omitted tail as follows.

   Mertens' product theorem gives
   $$ \prod_{\substack{q<p\\q\ \mathrm{prime}}}\frac{q}{q+1}
      =
      \prod_{\substack{q<p\\q\ \mathrm{prime}}}\frac{1-1/q}{1-1/q^2}
      \sim \frac{e^{-\gamma}/\log p}{6/\pi^2}, $$
   where $\gamma$ is Euler's constant.
   Consequently
   $$ \frac{6}{\pi^2}\frac{p-1}{p(p+1)}
      \prod_{\substack{q<p\\q\ \mathrm{prime}}}\frac{q}{q+1}
      \sim \frac{e^{-\gamma}}{p\log p}, $$
   and summing over primes $p>P$ and applying the prime number theorem shows the omitted tail is asymptotic to
   $e^{-\gamma}/\log P$.

   \section{Helfgott-Thompson Implementation Notes}
   \label{htappendix}

   The comparison in Section \ref{htcompare} uses a separate implementation of the Helfgott-Thompson algorithm based on the authors' original code.
   The goal was to make the timing comparison meaningful on the record machine, rather than to develop a new variant of the algorithm.
   The optimized code reproduces the authors' published values at every input where both have been run.
   More detailed implementation notes are included in \texttt{MertensHT/OPTIMIZATIONS.md} in the code release.

   The main arithmetic changes reduce repeated divisions in three places.
   In \texttt{SArr} and in the large-non-free phase, long runs of nearby quotients use the Quotient Predictor described in Section \ref{quotientcomp}.
   In \textsc{SumByLin}, several quotients and remainders are carried forward incrementally instead of recomputed by division inside the inner loops.
   Several integer arithmetic routines also use 64-bit fast paths when the operands fit in a machine word, falling back to 128-bit arithmetic otherwise.
   These are performance changes only, and every predicted quotient is verified before use.

   A factorization stencil of period $360360=2^3\cdot 3^2\cdot 5\cdot 7\cdot 11\cdot 13$ is precomputed once and reused.
   This removes repeated setup work in the factorization sieve.
   Buffer allocations in repeated routines were also moved outward where possible, arrays storing M\"obius values were changed to signed bytes, and arrays that are immediately overwritten use ordinary allocation rather than zero initialization.
   Integer square roots and related root computations were moved from GMP calls in hot paths to native integer routines with exact correction.

   A further optimization addresses the factorization sieve in the large-non-free phase.
   The array \texttt{SArr} requires the factorization of each integer in a range of length $\Delta+1$, in the notation of \cite{HT}, using primes up to $\sqrt{r_0+\Delta}$.
   At large $x$, a direct pass over these primes scatters updates across more data than fits in L1 cache.
   The implementation instead applies primes in tiles of $4096$ entries.
   Dense prime hits are processed tile by tile, while larger primes are handled by a small local bucket scheduler.
   Below a threshold range length, the overhead of this organization exceeds its benefit and the implementation falls back to a direct pass.

   The parallel structure was also changed.
   In the original code, the large-free phase consisted of nested loops that called the double-sum routine sequentially, with parallelism mainly inside each call.
   The version used here first enumerates many work items and then dispatches them dynamically.
   Many small double-sum items are processed with outer-level parallelism, while the few large brute-force items retain their own internal parallelism.
   Repeated local M\"obius sieves in the double-sum phase are replaced by calls to the segmented M\"obius sieve described in Section \ref{sieve}, so that each relevant interval is sieved once before the parallel summation.

   The double-sum routine in the large-free phase received separate attention.
   Profiling on the record machine singled out this routine, as discussed in Section \ref{htcompare}.
   Temporary arrays in \textsc{SumByLin} are reused through thread-local scratch buffers.
   More importantly, several correction terms are skipped when their intervals are provably empty or when the expression is an exact cancellation of the form
   $\operatorname{sumInter}(J)-\operatorname{sumInter}(J)$.
   In the most common cancellation case, the implementation detects full interval coverage by evaluating the quadratic at the two endpoints of the summation interval, and the inequality is never solved.
   A smaller change to the boundary-correction term skips empty interval sums and avoids recomputing residue products.

   For high-decade runs, the 256-bit fallbacks of Section \ref{htcompare} leave the ordinary 64-bit and 128-bit paths in place whenever their preconditions are satisfied.
   Together with the residue reduction before the multiplication by $n_\circ$, this addresses the arithmetic obstruction noted in \cite{HT} for the projected $10^{24}$ and $10^{25}$ runs.

   These optimizations do not introduce a new reduction in the same spirit as the inclusion-exclusion identities of Section \ref{ie}.
   In the classical implementation, those identities cancel large families of terms over long one-dimensional summation ranges.
   The Helfgott-Thompson computation is organized around many smaller two-dimensional regions and auxiliary \textsc{SumByLin} evaluations, where no comparable global cancellation was found.
   The changes here are therefore implementation optimizations of the existing Helfgott-Thompson decomposition.

   \section{GPU Variants of the Isolated-Value Computations}
   \label{gpuvariant}

   GPU acceleration has precedent here, as Kuznetsov's computation of $M(10^{22})$ was performed on a GPU \cite{EK}.
   Since the aim of this paper includes understanding how hardware shapes the choice of method, GPU variants were built for both isolated-value implementations.
   This appendix describes what was offloaded and when the offload pays.

   On the tested machines, the GPU is not faster than the optimized CPU code at the offloaded arithmetic itself.
   The gain comes only from overlap.
   If the GPU finishes its work while the CPU is still busy with independent work, most of the GPU time is hidden.
   If the CPU reaches the synchronization point first and waits, the advantage disappears.
   This principle governs both variants below.

   In the classical implementation, the only promising target is the 64-bit part of $S_2$.
   The $S_1$ sum consists of noncontiguous lookups into the compressed Mertens arrays, and measured GPU gather throughput was too low for that path to be useful.
   Arguments requiring 128-bit accumulators also remain on the CPU.
   The GPU therefore receives the 64-bit $S_2$ work in bounded waves while the CPU continues with the sieve and $S_1$.
   The waves keep dispatch and staging costs from becoming a significant part of the calculation.
   Enabling the GPU also changes the tuning: the best $c_x$ decreases, shifting visible work from $S_2$ to the threaded $S_1$ sum.
   The direction of this shift may seem backwards, but it follows from the overlap principle.
   The best parameters make the two sides finish at nearly the same time, and since the GPU is slower at this arithmetic, balance is reached by giving it less work.

   In the Helfgott-Thompson implementation, direct offloads of the large-non-free divisor-sum work were correct but slower than the CPU.
   The optimized CPU path is already division-free, cache-resident, and well parallelized, leaving little for the GPU to win back.
   The competitive mode instead overlaps the two phases.
   The brute-force double sums of the large-free phase are sent to the GPU while the CPU runs the large-non-free phase.
   The Diophantine large-free work remains on the CPU.
   The GPU kernels use 64-bit arithmetic, so the offload splits by quotient size: terms with quotients below $2^{60}$ are computed on the GPU, and the few terms above that bound stay on the CPU's 128-bit path.
   This split keeps the GPU engaged well past $10^{18}$.

   The Metal backend used does not expose double precision or native 64-bit integer division.
   On the Apple GPUs, a float-seeded division with exact integer correction removed most of this cost.
   On the AMD GPUs tested, the compiler-provided 64-bit division was faster.
   All offloaded sums are exact, and the GPU variants reproduce the known values of Section \ref{validation} on every machine tested.

   Timings were collected on three machines: the record and laptop machines of Section \ref{isolatedresults}, and an \emph{x86 machine} with a $28$-core Xeon W and a Radeon Pro Vega II Duo, whose two GPUs split the offloaded work.
   Table \ref{table:gputimings} gives the classical timings.
   The GPU-assisted variant is fastest on all three machines at $10^{16}$.
   The largest gain is on the x86 machine, whose CPU is slowest relative to the combined throughput of its two GPUs.
   The advantage then fades as $x$ grows and less of the GPU work is hidden behind the sieve and $S_1$.
   On the Apple machines it is gone by $10^{17}$.
   On the x86 machine it persists through $10^{18}$ and is essentially gone at $10^{19}$.

   \begin{table}[ht]
   \centering
   \begin{tabular}{ | c || r | r || r | r || r | r | }
   \hline
   \multirow{2}{*}{$x$} & \multicolumn{2}{c||}{record machine} & \multicolumn{2}{c||}{laptop machine} & \multicolumn{2}{c|}{x86 machine}\\
   \cline{2-7}
    &  \multicolumn{1}{c|}{CPU} &  \multicolumn{1}{c||}{GPU} &  \multicolumn{1}{c|}{CPU} &  \multicolumn{1}{c||}{GPU} &  \multicolumn{1}{c|}{CPU} & \multicolumn{1}{c|}{GPU}\\
   \hline
   $10^{16}$ & $2.39$ s & {\bf $\boldsymbol{2.38}$ s} & $6.79$ s & {\bf $\boldsymbol{5.94}$ s} & $7.62$ s & {\bf $\boldsymbol{4.97}$ s}\\
   $10^{17}$ & {\bf $\boldsymbol{10.8}$ s} & $14.0$ s & {\bf $\boldsymbol{31.9}$ s} & $37.3$ s & $36.0$ s & {\bf $\boldsymbol{27.7}$ s}\\
   $10^{18}$ & {\bf $\boldsymbol{49.9}$ s} & $69.1$ s & {\bf $\boldsymbol{152}$ s} & $189$ s & $168$ s & {\bf $\boldsymbol{142}$ s}\\
   $10^{19}$ & {\bf $\boldsymbol{238}$ s} & $343$ s & {\bf $\boldsymbol{734}$ s} & $946$ s & {\bf $\boldsymbol{793}$ s} & $795$ s\\
   \hline
   \end{tabular}
   \captionsetup{skip=2pt}
   \caption{Timings with and without the classical GPU $S_2$ offload. Each entry is the best observed end-to-end time at per-machine, per-decade tuned parameters. Boldface marks the faster CPU/GPU run on each machine.}
   \label{table:gputimings}
   \end{table}

   Table \ref{table:gpuht} gives the Helfgott-Thompson overlap timings on the record and x86 machines.
   These two machines represent the two relevant regimes: a fast CPU with unified memory, and a slower CPU paired with discrete GPUs.
   On the record machine, the overlap is slower at every measured decade.
   On the x86 machine, it gives a noticeable speedup through $10^{19}$ and is slower at $10^{20}$, where too little of the GPU's work is hidden behind the remaining CPU phases.

   \begin{table}[ht]
   \centering
   \setlength{\tabcolsep}{4pt}
   \begin{tabular}{ | c || r | r || r | r | }
   \hline
   \multirow{2}{*}{$x$} & \multicolumn{2}{c||}{record machine} & \multicolumn{2}{c|}{x86 machine}\\
   \cline{2-5}
    & \multicolumn{1}{c|}{CPU} & \multicolumn{1}{c||}{GPU} & \multicolumn{1}{c|}{CPU} & \multicolumn{1}{c|}{GPU}\\
   \hline
   $10^{16}$ & {\bf $\boldsymbol{12.74}$ s} & $15.55$ s & $74.84$ s & {\bf $\boldsymbol{49.0}$ s}\\
   $10^{17}$ & {\bf $\boldsymbol{59.68}$ s} & $73.92$ s & $322.3$ s & {\bf $\boldsymbol{217}$ s}\\
   $10^{18}$ & {\bf $\boldsymbol{276.7}$ s} & $330.5$ s & $1349$ s & {\bf $\boldsymbol{919}$ s}\\
   $10^{19}$ & {\bf $\boldsymbol{1240}$ s} & $1462$ s & $1.60$ h & {\bf $\boldsymbol{1.10}$ h}\\
   $10^{20}$ & {\bf $\boldsymbol{1.54}$ h} & $2.55$ h & {\bf $\boldsymbol{6.86}$ h} & $7.59$ h\\
   \hline
   \end{tabular}
   \captionsetup{skip=2pt}
   \caption{Timings with and without Helfgott-Thompson GPU overlap. Boldface marks the faster CPU/GPU run on each machine.}
   \label{table:gpuht}
   \end{table}
   \FloatBarrier

   The two algorithms show the same pattern.
   The GPU helps when it supplies otherwise idle arithmetic capacity on a machine whose CPU stays busy with independent work.
   It helps least on the fastest unified-memory host, where the CPU work is already fast and the GPU competes with the CPU for memory bandwidth.
   The x86 machine benefits more because its discrete GPUs have their own memory and because its CPU is slower at the integer arithmetic being hidden.
   Even there, the advantage does not reach the largest inputs tested.
   
   These are partial offloads, so the possible gains have an Amdahl-type ceiling.
   Larger gains would require moving additional phases to the GPU or shifting the tuning substantially toward the offloaded work.
   On hardware exposing double precision or native 64-bit integer division, as in the CUDA setting of \cite{EK}, the emulation costs paid here disappear and the measured constants could change.
   On the machines used here, however, the GPU variants are useful validation and comparison tools rather than improvements to the record-scale path.



\begin{thebibliography}{99}

   \bibitem{CD}
     H. Cohen and F. Dress, \emph{Calcul num\'erique de $M(x)$}
     Rapport de l'ATP A12311 ``Informatique 1975'', CNRS (1979) 1\,--\,13.

   \bibitem{DR}
     M. Del\'eglise and J. Rivat, \emph{Computing the summation of the M\"obius function}
     Exp. Math. {\bf 5} (1996) 291\,--\,295.

   \bibitem{Dre}
     F. Dress, \emph{Fonction sommatoire de la fonction de M\"obius, 1. Majorations exp\'erimentales}
     Exp. Math. {\bf 2} (1993) 89\,--\,98.

   \bibitem{GM}
     T. Granlund and P. Montgomery, \emph{Division by invariant integers using multiplication}
     Proceedings of the ACM SIGPLAN 1994 Conference on Programming Language Design and Implementation, 1994.

   \bibitem{HelSieve}
     H. A. Helfgott, \emph{An improved sieve of Eratosthenes}
     Math. Comp. {\bf 89} (2020) 333\,--\,350.

   \bibitem{HT}
     H. A. Helfgott and L. Thompson, \emph{Summing $\mu(n)$: a faster elementary algorithm}
     Res. Number Theory {\bf 9} (2023), no. 1, Paper No. 6.

   \bibitem{HKM}
     D. Hirsch, I. Kessler, and U. Mendlovic, \emph{Computing $\pi(N)$: an elementary approach in $\widetilde{O}(\sqrt{N})$ time}
     arXiv:2212.09857 [math.NT] (2023).

   \bibitem{Hur18}
     Greg Hurst, \emph{Computations of the Mertens function and improved bounds on the Mertens conjecture}
     Math. Comp. {\bf 87} (2018) 1013\,--\,1028.

   \bibitem{KL}
     T. Kotnik and J. van de Lune, \emph{Further systematic computations on the summatory function of the M\"obius function}
     Report MAS-R0313, CWI Amsterdam (November 2003).

   \bibitem{EK}
     Eugene Kuznetsov, \emph{Computing the Mertens function on a GPU}
     arXiv:1108.0135 [math.NT] (2011).

   \bibitem{LO}
     J. C. Lagarias and A. M. Odlyzko, \emph{Computing $\pi(x)$: an analytic method}
     J. Algorithms {\bf 8} (1987) 173\,--\,191.

   \bibitem{Leh}
     R. S. Lehman, \emph{On Liouville's function}
     Math. Comp. {\bf 14} (1960) 311\,--\,320.

   \bibitem{LL}
     W. M. Lioen and J. van de Lune, \emph{Systematic computations on Mertens' conjecture and Dirichlet's divisor problem by vectorized sieving}
     From Universal Morphisms to Megabytes: a Baayen Space Odyssey, CWI Amsterdam (1994) 421\,--\,432.

   \bibitem{Mer}
     F. Mertens, \emph{\"Uber eine zahlentheoretische Funktion}
     Sitzungsberichte Akad. Wiss. Wien IIa {\bf 106} (1897) 761\,--\,830.

   \bibitem{Neu}
     G. Neubauer, \emph{Eine empirische Untersuchung zur Mertensschen Funktion}
     Numer. Math. {\bf 5} (1963) 1\,--\,13.

   \bibitem{NG}
     Nathan Ng, \emph{The distribution of the summatory function of the M\"obius function}
     Proc. London Math. Soc. {\bf 89} (2004) 361\,--\,389.

   \bibitem{OtR}
     A. M. Odlyzko and H. J. J. te Riele, \emph{Disproof of the Mertens conjecture}
     J. Reine Angew. Math. {\bf 357} (1985) 138\,--\,160.

   \bibitem{St1}
     R. D. von Sterneck, \emph{Empirische Untersuchung \"uber den Verlauf der zahlentheoretischen Funktion $\sigma(n)=\sum_{x=1}^{n}\mu(x)$ im Intervalle von 0 bis 150\,000}
     Sitzungsberichte Akad. Wiss. Wien IIa {\bf 106} (1897) 835\,--\,1024.

   \bibitem{St2}
     R. D. von Sterneck, \emph{Neue empirische Daten \"uber die zahlentheoretische Funktion $\sigma(n)$}
     Proc. 5th International Congress of Mathematicians, vol. 1, Cambridge University Press (1913) 341\,--\,343.

   \end{thebibliography}
\end{document}